\newcounter{theorem}
\newtheorem{theorem}[theorem]{Theorem}
\newtheorem{lemma}[theorem]{Lemma}
\newtheorem{prop}[theorem]{Proposition}
\newtheorem{cor}[theorem]{Corollary}
\theoremstyle{definition}
\newtheorem{defn}[theorem]{Definition}
\newtheorem{notation}[theorem]{Notation}
\theoremstyle{remark}
\newtheorem*{remark*}{Remark}
\newtheorem{rmk}[theorem]{Remark}
\numberwithin{equation}{section}
\newcommand{\cstar}[0]{\ensuremath{\mathrm{C}^*}}
\newcommand{\id}{\mathrm{id}}
\newcommand{\dimrokc}{\dim_{\mathrm{Rok}}^{\mathrm{c}}}
\newcommand{\Aut}{\mathrm{Aut}}
\title{On a Rokhlin property for abelian group actions on \cstar-algebras}
\author[J.\ Christensen]{Johannes Christensen}
\address{\hskip-\parindent Department of mathematics, Aarhus University, Ny Munkegade 118, 1530, 8000 Aarhus C, Denmark}
\email{johannes@math.au.dk}
\author[R.\ Neagu]{Robert Neagu}
\author[G.\ Szabó]{Gábor Szabó}
\address{\hskip-\parindent Department of mathematics, KU Leuven, Celestijnenlaan 200B, box 2400 \linebreak 3001 Leuven, Belgium.}
\email{robert.neagu@kuleuven.be}
\email{gabor.szabo@kuleuven.be}
\thanks{RN and GS funded by the European Union. Views and opinions expressed are however those of the authors only and do not necessarily reflect those of the European Union or the European Research Council. Neither the EU nor the ERC can be held responsible for them.}
\begin{document}

\begin{abstract}
In this article, we study the so-called abelian Rokhlin property for actions of locally compact, abelian groups on \cstar-algebras. 
We propose a unifying framework for obtaining various duality results related to this property.
The abelian Rokhlin property coincides with the known Rokhlin property for actions by the reals (i.e., flows), but is not identical to the known Rokhlin property in general.
The main duality result we obtain is a generalisation of a duality for flows proved by Kishimoto in the case of Kirchberg algebras.
We consider also a slight weakening of the abelian Rokhlin property, which allows us to show that all traces on the crossed product \cstar-algebra are canonically induced from invariant traces on the the coefficient \cstar-algebra.
\end{abstract}

\maketitle

\numberwithin{theorem}{section}	

\section*{Introduction}
\renewcommand*{\thetheorem}{\Alph{theorem}}

One of the landmark results in the study of locally compact, abelian groups is the Pontryagin duality.
In simple terms, it says that for a given locally compact, abelian group $G$, one can uniquely associate another locally compact, abelian group $\widehat{G}$, called the Pontryagin dual of $G$.
Moreover, this correspondence is dual in the sense that the dual group of $\widehat{G}$ is canonically isomorphic to $G$.

In the realm of operator algebras, Pontryagin duality paved the way to the development of a duality theory for actions of locally compact, abelian groups on operator algebras.
If $A$ is either a \cstar-algebra or a von Neumann algebra and $\alpha\colon G\curvearrowright A$ is an action of a locally compact, abelian group, then there exists a canonical action $\widehat{\alpha}\colon \widehat{G}\curvearrowright A\rtimes_\alpha G$ of the dual group on the crossed product.
In \cite{Takesaki}, Takesaki showed that this construction has a certain duality behaviour if $A$ is assumed to be a von Neumann algebra.
In fact, this duality result proved to be a quintessential ingredient in Connes' subdivision of type III factors \cite{Con73}.
On the \cstar-algebraic side, a similar result was later proved by Takai \cite{Takai}.
Furthermore, one might hope that the Takai--Takesaki duality not only provides nice structural results for crossed products, but in certain situations, the action $\alpha$ can be understood via its dual action $\widehat{\alpha}$ and vice versa.
More precisely, it is natural to ask to what extent one can relate properties of an action $\alpha$ to properties of its dual action $\widehat{\alpha}$. 

For the purpose of classification of group actions on \cstar-algebras, a property which has attracted an enormous amount of interest is the so-called Rokhlin property.
This is a condition which has been imported from a certain property of ergodic, measure-preserving actions of amenable groups on probability spaces \cite{Roh48, OW80}, and further defined for actions of a large class of abelian groups on operator algebras (see for example \cite{ConAut75,Kish77,BSKR93,KishRokhlinFlows,Izu04}).
Due to its considerable impact in the classification of \cstar-dynamics, a problem which has received significant attention in the last two decades is to characterise when a given action has the Rokhlin property.
In various cases, the Rokhlin property was shown to enjoy a duality-type relation to \emph{approximate representability}, as defined by Izumi in \cite{Izu04} in the setting of finite abelian groups.
Indeed, if the \cstar-algebra $A$ is separable, unital and the acting group $G$ is finite abelian, Izumi showed in \cite{Izu04} that the action $\alpha$ has the Rokhlin property if and only if the dual action $\widehat{\alpha}$ is approximately representable.
Moreover, $\alpha$ is approximately representable if and only if the dual action $\widehat{\alpha}$ has the Rokhlin property.
Further generalisations of this result for finite and compact groups can be found in \cite{GardSant16,Naw16,SeqSplit,GardellaRokhCpt,GardellaCtsRokh, GardellaCtsRokhCorrig}, while Kishimoto proved similar duality results for continuous actions of $\mathbb{R}$ on Kirchberg algebras \cite{Kish03,Kish04}.

Although the Rokhlin property has enjoyed a remarkable success in the classification of \cstar-dynamics, it is fair to say that it often imposes various $K$-theoretic conditions which can make it somewhat restrictive  and difficult to check in practice.
Roughly speaking, all kinds of Rokhlin-type properties for group actions on \cstar-algebras rely on finding finite (or compact \cite{AranoKubota17}) approximate representations of the acting group $G$ which are also approximately central.
In the present work, we study a property for actions of locally compact, abelian groups on \cstar-algebras called the \emph{abelian Rokhlin property}, which is a straightforward analogue of the way one defines the Rokhlin property for actions of abelian groups on von Neumann algebras \cite{Shimada14}.

\begin{defn}[Definition \ref{defn: AbelianRokhlin}]
Let $A$ be a separable \cstar-algebra and $G$ be a second-countable, locally compact, abelian group. Let $\alpha\colon G\curvearrowright A$ be a continuous action. Then we say that $\alpha$ has the abelian Rokhlin property if for any $\chi\in \widehat{G}$, there exists a unitary $u\in F_\infty(A)$ such that $\widetilde{\alpha}_{\infty,g}(u)=\chi(g)u$ for any $g\in G$.   
\end{defn}

In the case when the acting group is finite, this is a weakening of Izumi's Rokhlin property (see Remark \ref{rmk: AbRokFinite}), whereas in the case of the acting group being $\mathbb{R}^k$ for some $k\in\mathbb{N}$, it coincides with the known Rokhlin property.
With this definition in hand, we will provide a framework that unifies previous duality results appearing in the literature and show that the abelian Rokhlin property is dual to a property that we call \emph{pointwise strong approximate innerness} (Definition \ref{defn: ApproxInner}).
This coincides with Izumi's notion of strong approximate innerness when $G$ is finite.
Compared to approximate representability, we require that each automorphism belonging to the action is approximately inner via a sequence of unitaries that are approximate fixed points, but without asking the unitaries to form an approximate representation of the group $G$ (Remark \ref{rmk: IzumiApproxInner}).
The main result we prove is the following duality theorem obtained in the third section.
This result is inspired by and can be understood as a generalization of \cite[Theorem 4.6]{Kish04} to separable \cstar-algebras. 
Moreover, we note that if the acting group is $\mathbb{R}$, the following theorem is the \cstar-counterpart of a duality result proved by Masuda and Tomatsu in \cite[Theorem 4.11]{MasudaTomatsu}.

\begin{theorem}[Theorem \ref{thm: MainResultDuality}]
Let $A$ be a separable \cstar-algebra and $G$ a second-countable, locally compact, abelian group with dual group $\widehat{G}$.
Let $\alpha\colon G\curvearrowright A$ be a continuous action.
\begin{enumerate}[label=\textup{(\roman*)}]
    \item The action $\alpha$ is pointwise strongly approximately inner if and only if the dual $\widehat{G}$-action $\widehat{\alpha}$ has the abelian Rokhlin property.
    \item The action $\alpha$ has the abelian Rokhlin property if and only if the dual $\widehat{G}$-action $\widehat{\alpha}$ is pointwise strongly approximately inner.
\end{enumerate}    
\end{theorem}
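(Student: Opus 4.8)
The plan is to build everything on a single correspondence between Rokhlin eigenunitaries for the dual action and implementing unitaries for the original action, realised by multiplication with the canonical implementing multipliers $\lambda_g\in M(A\rtimes_\alpha G)$. Throughout I use the two structural facts about the dual action that the covariance relation reads $\lambda_g a\lambda_g^{*}=\alpha_g(a)$ for $a\in A$, that $\widehat\alpha_\chi(\lambda_g)=\chi(g)\lambda_g$ and $\widehat\alpha_\chi|_A=\id_A$, and that the fixed point algebra of $\widehat\alpha$ is exactly $A$, so that on the level of multipliers $M(A\rtimes_\alpha G)^{\widehat\alpha}=M(A)$. Under Pontryagin duality a character of $\widehat G$ is evaluation $\chi\mapsto\chi(g)$ at some $g\in G$, so the abelian Rokhlin property for $\widehat\alpha$ asks, for each $g\in G$, for a unitary $w\in F_\infty(A\rtimes_\alpha G)$ with $\widetilde{\widehat\alpha}_{\infty,\chi}(w)=\chi(g)\,w$ for all $\chi\in\widehat G$.

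First I would dispatch the two forward implications by direct computation at the level of representing sequences (in the multiplier algebra). For one half of (ii), given $\alpha$ abelian Rokhlin and a Rokhlin sequence $(u_n)$ in $A$ with $\alpha_g(u_n)\approx\chi(g)u_n$ and $[u_n,A]\to 0$, the unitaries $u_n^{*}$, viewed inside $M(A\rtimes_\alpha G)$, implement $\widehat\alpha_\chi$ (on $A$ by asymptotic centrality, on $\lambda_g$ because $u_n^{*}\lambda_g u_n=u_n^{*}\alpha_g(u_n)\lambda_g\approx\chi(g)\lambda_g$) and are \emph{exactly} $\widehat\alpha$-fixed, since they lie in $A$; this is precisely pointwise strong approximate innerness of $\widehat\alpha$. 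For one half of (i), given $\alpha$ pointwise strongly approximately inner with unitaries $(u_n)$ satisfying $\Ad(u_n)\to\alpha_g$ and $\alpha_h(u_n)\to u_n$, I set $w_n:=u_n^{*}\lambda_g$. Then $\widehat\alpha_\chi(w_n)=\chi(g)w_n$ holds on the nose, while $w_n$ commutes asymptotically with $A$ (this uses $\Ad(u_n)\to\alpha_g$) and with each $\lambda_h$ (this uses $\alpha_h(u_n)\to u_n$), so $(w_n)$ defines a Rokhlin eigenunitary $w\in F_\infty(A\rtimes_\alpha G)$. A Kirchberg-type reindexation turns these approximate, finitely-tested relations into honest elements of $F_\infty$, and the uniform-on-compacts invariance of the $u_n$ is what guarantees commutation with the whole strictly continuous family $(\lambda_h)_{h\in G}$.

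The two converse implications run the same correspondence backwards and share one analytic core. For the converse in (i), I would lift a Rokhlin eigenunitary for $\widehat\alpha$ to unitaries $(w_n)$ with $\widehat\alpha_\chi(w_n)\approx\chi(g)w_n$ and set $a_n:=w_n\lambda_g^{*}$; then $(a_n)$ is asymptotically $\widehat\alpha$-invariant, hence asymptotically lands in $M(A)=M(A\rtimes_\alpha G)^{\widehat\alpha}$, and a perturbation to a genuine unitary in $\mathcal U(M(A))$ produces exactly the implementing, $\alpha$-invariant unitaries witnessing that $\alpha$ is pointwise strongly approximately inner. Symmetrically, for the converse in (ii), the unitaries implementing $\widehat\alpha_\chi$ are by definition asymptotically $\widehat\alpha$-invariant, hence again asymptotically in $M(A)$; the resulting unitaries $u_n\in M(A)$ are asymptotically central in $A$ (because $\Ad(u_n)$ must asymptotically fix $A$) and satisfy $\alpha_g(u_n)\approx\chi(g)u_n$ (read off from the action on $\lambda_g$), so they assemble into a Rokhlin eigenunitary in $F_\infty(A)$.

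The main obstacle is the single step common to both converses: upgrading an \emph{approximately} $\widehat\alpha$-invariant unitary of $M(A\rtimes_\alpha G)$ to a \emph{genuine} unitary of $M(A)$. When $G$ is discrete, $\widehat G$ is compact and this is painless: averaging against normalised Haar measure on $\widehat G$ gives the canonical conditional expectation onto $M(A)$, moves $a_n$ a controlled distance, and a polar-decomposition correction yields the required unitary. For non-discrete $G$ no such norm-convergent average exists, and one must instead exploit that the invariance is uniform on compact subsets of $\widehat G$ and pass to an approximately invariant Følner exhaustion, or equivalently invoke the Landstad/spectral-subspace description of the dual action; keeping this reindexation compatible with the continuity in $g$ demanded by the abelian Rokhlin property is the delicate point. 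An alternative that sidesteps the non-discrete averaging is to deduce each converse from the corresponding forward implication applied to $\widehat\alpha$ together with Takai--Takesaki duality: since $\widehat{\widehat\alpha}$ is cocycle conjugate to $\alpha\otimes\id_{\mathcal K}$ and both the abelian Rokhlin property and pointwise strong approximate innerness are invariant under stabilisation and exterior equivalence, the double dual statements descend to $\alpha$ itself.
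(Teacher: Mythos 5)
Your two forward implications are essentially sound and correspond to the ``easy'' halves of the paper's Lemmas \ref{lemma: EquivSeqSplit} and \ref{lemma: CommDiagrams} (your heuristic $w\leftrightarrow u^*\lambda_g$ is exactly the unitary $\lambda_g^*\lambda_g^{\alpha^g}$ appearing in the paper's isomorphism $\eta_g$), modulo the technical point that for non-discrete $G$ and non-unital $A$ your witnesses $u_n^*$ and $u_n^*\lambda_g$ live in $\mathcal{M}(A\rtimes_\alpha G)$ rather than in $A\rtimes_\alpha G$, so they do not directly represent elements of $F_\infty(A\rtimes_\alpha G)$ or satisfy Definition \ref{defn: ApproxInner}; one must cut down by an approximately invariant, approximately central approximate unit, which is precisely what the normaliser/$\sigma$-ideal machinery of Remark \ref{rmk: SigmaIdeal} is for.

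The genuine gap is the step you yourself flag as the common core of both converses: the claim that a contraction of $\mathcal{M}(A\rtimes_\alpha G)$ (or of $A\rtimes_\alpha G$) which is approximately $\widehat{\alpha}$-invariant, uniformly on compact subsets of $\widehat{G}$, must be close to $\mathcal{M}(A)$. This is \emph{false} whenever $G$ is non-discrete, i.e.\ $\widehat{G}$ is non-compact. Already for $A=\mathbb{C}$ with the trivial $\mathbb{R}$-action one has $A\rtimes_\alpha\mathbb{R}\cong C_0(\mathbb{R})$ with $\widehat{\alpha}$ acting by translation; the unitaries $v_n(t)=e^{it/n}\in C_b(\mathbb{R})=\mathcal{M}(C_0(\mathbb{R}))$ satisfy $\sup_{|s|\le R}\|\widehat{\alpha}_s(v_n)-v_n\|\to 0$ for every $R$, yet each $v_n$ has distance at least $1$ from $\mathcal{M}(A)=\mathbb{C}1$. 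No F{\o}lner averaging can repair this: averaging over a compact set only improves invariance over that set, and exact invariance is what fails to be detectable from compact-set estimates. So the converse arguments as written do not close. Your proposed escape route---deduce the converses from the forward implications via Takai duality---is in fact essentially what the paper does, but it is not a one-line reduction: the paper packages both properties as statements that certain canonical maps ($\iota_g$ and $\id_A\otimes 1$) are equivariantly sequentially split (Lemmas \ref{lemma: EquivSeqSplit} and \ref{lemma: AbRokhSeqSplit}), invokes the duality theorem for sequentially split morphisms from \cite{SeqSplit} (where Takai duality and the stabilisation/exterior-equivalence bookkeeping are done once and for all), and then uses the exact isomorphisms of Lemma \ref{lemma: CommDiagrams} to identify the dual maps. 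You would need to either adopt that framework or supply equivalent lemmas (in particular, cocycle-conjugacy invariance of pointwise strong approximate innerness is not obvious from the definition, since the implementing contractions are not assumed approximately central).
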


In the last section, we show that the abelian Rokhlin property can be used to characterise densely defined, lower semicontinuous traces on crossed products.
For convenience we shall only state a special case here, but note that Theorem~\ref{thm: Traces} in the main body of the paper is stated with a strictly weaker assumption for actions of locally compact abelian groups.

\begin{theorem}\label{thm: ThmC}
Let $G$ be a countable discrete abelian group and $\alpha\colon G\curvearrowright A$ an action on a separable \cstar-algebra.
Suppose that $\alpha$ has the abelian Rokhlin property.
Then every densely defined lower semicontinuous trace on $A\rtimes_\alpha G$ factors through the canonical conditional expectation $A\rtimes_\alpha G\twoheadrightarrow A$.
\end{theorem}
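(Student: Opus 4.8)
The plan is to prove the equivalent statement that $\tau=\tau\circ E$ on the Pedersen ideal of $B:=A\rtimes_\alpha G$, where $E$ is the canonical conditional expectation. The starting point is the observation that, since $G$ is discrete and $\widehat{G}$ is therefore compact, character orthogonality $\int_{\widehat{G}}\chi(g)\,d\chi=\delta_{g,e}$ identifies $E$ with the average of the dual action, $E=\int_{\widehat{G}}\widehat{\alpha}_\chi\,d\chi$, where $\widehat{\alpha}_\chi(au_g)=\chi(g)au_g$. Hence it suffices to show first that $\tau$ is invariant under the dual action $\widehat{\alpha}$, and then that this invariance forces $\tau=\tau\circ E$.

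For the invariance I would fix $\chi\in\widehat{G}$ and use the abelian Rokhlin property to produce a unitary $u\in F_\infty(A)$ with $\widetilde{\alpha}_{\infty,g}(u)=\chi(g)u$ for all $g\in G$. Lifting $u$ to a central sequence of unitaries $(u_n)$ in the unitisation $\widetilde{A}$ — so that $\|[u_n,a]\|\to 0$ and $\|(\alpha_g(u_n)-\chi(g)u_n)a\|\to 0$ for all $a\in A$, $g\in G$ — a direct computation using $u_gu_n^*u_g^*=\alpha_g(u_n^*)$ gives
\[
\Ad(u_n)(au_g)=u_n\,a\,\alpha_g(u_n^*)\,u_g\xrightarrow[n\to\infty]{}\overline{\chi(g)}\,au_g=\widehat{\alpha}_{\overline{\chi}}(au_g).
\]
Since the maps $\Ad(u_n)$ are contractive and the finite sums $\sum_g a_gu_g$ are dense, it follows that $\Ad(u_n)\to\widehat{\alpha}_{\overline{\chi}}$ pointwise in norm on all of $B$ (this is precisely the pointwise strong approximate innerness of $\widehat{\alpha}$ furnished by Theorem~\ref{thm: MainResultDuality}(ii)). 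Now for $x\in\mathrm{Ped}(B)_+$ the trace identity gives $\tau(\Ad(u_n)(x))=\tau(x)$, while $\Ad(u_n)(x)\to\widehat{\alpha}_{\overline{\chi}}(x)$ in norm; lower semicontinuity yields $\tau(\widehat{\alpha}_{\overline{\chi}}(x))\le\tau(x)$, and running the same argument with $(u_n^*)$, which implements $\widehat{\alpha}_\chi$, gives the reverse inequality. Thus $\tau\circ\widehat{\alpha}_\chi=\tau$ for every $\chi\in\widehat{G}$.

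With invariance in hand I would first approximate $E(x)=\int_{\widehat{G}}\widehat{\alpha}_\chi(x)\,d\chi$ in norm by finite convex combinations $y_N=\sum_i\mu_i\,\widehat{\alpha}_{\chi_i}(x)$ of translates; each satisfies $\tau(y_N)=\tau(x)$ by invariance and linearity, so lower semicontinuity gives $\tau(E(x))\le\tau(x)$, i.e. $\tau\circ E\le\tau$ as lower semicontinuous traces. The difference $\rho:=\tau-\tau\circ E$ is then a positive lower semicontinuous trace which vanishes on $A$, because $E$ restricts to the identity on $A$. For a generator one has $(au_g)^*(au_g)=\alpha_{g^{-1}}(a^*a)\in A$, whence $\rho\big((au_g)^*(au_g)\big)=0$; the Cauchy--Schwarz inequality for $\rho$ then forces the associated seminorm $\|\,\cdot\,\|_\rho=\rho(\,\cdot^*\,\cdot\,)^{1/2}$ to vanish on every finite sum $x=\sum_g a_gu_g$, and by density together with lower semicontinuity $\rho=0$. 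Therefore $\tau=\tau\circ E=(\tau|_A)\circ E$, as claimed.

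The conceptual core — that the dual action is pointwise approximately inner, implemented by the Rokhlin unitaries — is straightforward; the main obstacle is the functional-analytic bookkeeping forced by $\tau$ being only densely defined and lower semicontinuous rather than bounded. Concretely, lower semicontinuity delivers only one of the two inequalities at each stage, so equality must be recovered by a symmetric argument using $u_n$ and $u_n^*$ and, for the passage from invariance to $E$, by the positivity argument for $\rho$ (alternatively, one could extend $\tau$ to a normal trace on $B^{**}$ and commute it with the weak-$*$ integral defining $E$). A second technical point to handle with care is the lift of the Rokhlin unitary from $F_\infty(A)$ to an honest central sequence of unitaries in $\widetilde{A}$ realising the intertwining relations modulo the annihilator of $A$; this is routine but must be arranged so that the displayed limit holds after testing against elements of $A$.
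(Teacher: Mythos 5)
Your core strategy coincides with the paper's: show that every densely defined lower semicontinuous trace on $A\rtimes_\alpha G$ is invariant under the dual action, by using the duality (Theorem~\ref{thm: MainResultDuality}) to realise each $\widehat{\alpha}_\chi$ as a pointwise limit of maps $x\mapsto v_nxv_n^*$ with approximately fixed $v_n$, and then exploiting lower semicontinuity twice (once for $\chi$, once for $\chi^{-1}$) to upgrade the resulting one-sided inequality to equality. Where you diverge is the final step: the paper finishes by citing the correspondence $\tilde{T}(A\rtimes_\alpha G)^{\widehat{\alpha}}=\{\hat{\tau}\mid\tau\in\tilde{T}(A)^\alpha\}$ from \cite{VigandPedersen82} (Proposition~\ref{prop: tracial-correspondence}), which works for arbitrary second-countable locally compact abelian $G$, whereas you give a self-contained averaging argument ($E=\int_{\widehat{G}}\widehat{\alpha}_\chi\,d\chi$, Riemann sums, then Cauchy--Schwarz for $\rho=\tau-\tau\circ E$) that is specific to discrete $G$, where $\widehat G$ is compact and $E$ is an honest conditional expectation onto $A\subseteq A\rtimes_\alpha G$. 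That is a legitimate and more elementary route for Theorem~\ref{thm: ThmC}, though it does not recover the general Theorem~\ref{thm: Traces}.

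Two technical points in your write-up need more care. First, in the non-unital case a unitary of $F_\infty(A)=(A_\infty\cap A')/(A_\infty\cap A^\perp)$ does not lift to a sequence of unitaries in $\widetilde{A}$; what the exact sequence \eqref{eq: SESNorm} and Remark~\ref{rmk: SigmaIdeal} provide is a sequence of contractions $v_n\in A$ with $v_nv_n^*$ and $v_n^*v_n$ converging strictly to $1_{\mathcal{M}(A)}$ --- exactly the data in Definition~\ref{defn: ApproxInner}. Your identity $\tau(\Ad(u_n)(x))=\tau(x)$ should therefore be replaced by the inequality $\tau(v_nxv_n^*)=\tau(x^{1/2}v_n^*v_nx^{1/2})\le\tau(x)$, which is all the argument needs and is precisely what the paper's proof of Theorem~\ref{thm: Traces} uses. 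Second, $\rho=\tau-\tau\circ E$ is a difference of $[0,\infty]$-valued lower semicontinuous weights, so it is neither well defined where $\tau$ is infinite nor automatically lower semicontinuous (differences of lower semicontinuous functions need not be), and your concluding ``by density together with lower semicontinuity $\rho=0$'' leans on properties $\rho$ has not been shown to have. This is repairable --- restrict to a dense hereditary subalgebra on which $\tau$, hence $\tau\circ E\le\tau$, is finite, check that $\rho$ is there a positive tracial functional vanishing on $A$, run Cauchy--Schwarz, and use that a densely defined lower semicontinuous trace is determined by its values on such a subalgebra --- but as written it is a gap, and it is exactly the kind of bookkeeping that the citation of Proposition~\ref{prop: tracial-correspondence} is designed to avoid.
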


At the end of the article we will comment how this result (or rather Theorem~\ref{thm: Traces}) can be used to unify the proofs of various similar results found in the literature, such as in \cite{Tho21,EST22,Nea24} (see Remark \ref{rmk: Traces}).


\subsection*{Acknowledgements}
RN and GS were supported by the European Research Council under the European Union's Horizon Europe research and innovation programme (ERC grant AMEN--101124789).
GS was furthermore supported by research project G085020N funded by the Research Foundation Flanders (FWO).
RN was furthermore supported by the postdoctoral fellowship 1204626N of the Research Foundation Flanders (FWO).
JC was supported by the postdoctoral fellowship 1291823N of the Research Foundation Flanders (FWO) and by a research grant (VIL72080) from Villum Fonden. 
We would like to thank the anonymous referee for the helpful comments and for suggesting an alternative proof of Theorem~\ref{thm: Traces}. 

For the purpose of open access, the authors have applied a CC BY public copyright license to any author accepted manuscript version arising from this submission.
\allowdisplaybreaks


\section{Preliminaries}
\numberwithin{theorem}{section}

\subsection{Sequence algebras}\label{subsect: SeqAlg}
In this subsection, we will record some standard facts about (central) sequence algebras.
We invite the reader to consult \cite{KirchbergAbel} for a more detailed account of the topic.

Given a separable \cstar-algebra $A$, we denote by $A_\infty=\ell^\infty(\mathbb{N},A)/c_0(\mathbb{N},A)$ the \cstar-algebra obtained as the quotient of the bounded sequences of $A$ by the null sequences.
In particular, we will realise $A$ as a subalgebra of $A_\infty$ by viewing elements in $A$ as constant sequences in $A_\infty$.
Let $D_{\infty,A}\coloneqq \overline{AA_\infty A}$ be the hereditary \cstar-subalgebra generated by $A$ in $A_\infty$.
Moreover, let
\[
\mathcal{N}(D_{\infty,A},A_\infty)=\{x\in A_\infty \mid xD_{\infty,A}+D_{\infty,A}x\subset D_{\infty,A}\}
\] 
be the normaliser of $D_{\infty,A}$ inside $A_\infty$ and 
\[
A_\infty\cap A^\perp=\{x\in A_\infty \mid \forall a\in A\colon xa=ax=0 \}
\] 
be the two-sided annihilator of $D_{\infty,A}$ inside $A_\infty$.
Then, by \cite[Proposition 1.9(4)+(5)]{KirchbergAbel} (see also \cite[Proposition 1.5]{SeqSplit}), we have a short exact sequence
\begin{equation}\label{eq: SESNorm}
\begin{tikzcd}
A_\infty\cap A^\perp \rightarrowtail \mathcal{N}(D_{\infty,A},A_\infty) \twoheadrightarrow  \mathcal{M}(D_{\infty,A}), 
\end{tikzcd}
\end{equation}
where the surjection is the canonical map induced by the universal property of the multiplier algebra.
Furthermore, we denote Kirchberg's central sequence algebra $(A_\infty\cap A')/(A_\infty\cap A^\perp)$ by $F_\infty(A)$. 

With the set-up above, let $G$ be a second-countable, locally compact group and $\alpha\colon G\curvearrowright A$ a continuous action.
Then we get a (possibly discontinuous) $G$-action $\alpha_\infty$ on $A_\infty$.
Since $D_{\infty,A}$ is an $\alpha_\infty$-invariant subalgebra, we get an action on $\mathcal{N}(D_{\infty,A},A_\infty)$ by restricting $\alpha_\infty$ and an induced action $\widetilde{\alpha}_\infty$ on $\mathcal{M}(D_{\infty,A})$.
Similarly, there is an induced action on $F_\infty(A)$ which is also denoted by $\widetilde{\alpha}_\infty$.
These actions are in general not point-norm continuous, so we restrict to the continuous part when necessary.
For instance, we consider
\[
A_{\infty,\alpha} = \{x \in A_\infty \mid [g \to \alpha_{\infty,g} (x)]\ \text{is norm-continuous}\}.
\]
Moreover, as a consequence of \cite[Theorem 2]{Brown00}, $A_{\infty,\alpha}$ coincides with the \cstar-algebra $\ell^\infty_\alpha(\mathbb{N},A)/c_0(\mathbb{N},A)$, where
\[
\ell^\infty_\alpha(\mathbb{N},A) = \{(x_n)_{n\in\mathbb{N}}\in \ell^\infty(\mathbb{N},A) \mid [g\to (\alpha_g (x_n))_{n\in\mathbb{N}}] \ \text{is continuous}\}.
\]
In particular, using \cite[Lemma 3.5]{SeqSplit}, we see for every $x\in A_{\infty,\alpha}$, any representing sequence $(x_n)_{n\in\mathbb{N}}\in \ell^\infty(\mathbb{N},A)$ of $x$, and every compact subset $K\subseteq G$ that 
\[
\max_{g\in K} \|\alpha_{g,\infty}(x)-x\|=\limsup_{n\to\infty}\max_{g\in K}\|\alpha_g(x_n)-x_n\|.
\]
We will often use this fact without mention throughout the paper.

\begin{rmk}\label{rmk: SigmaIdeal}
Evidently the maps in \eqref{eq: SESNorm} are all equivariant.
Since we can always find (see \cite[Lemma 1.4]{KA88}) a countable approximate unit of $A$, say $(e_n)_{n\in\mathbb{N}}$, such that $\lim_{n\to\infty}\max_{g\in K}\|\alpha_g(e_n)-e_n\|=0$ for every compact subset $K\subseteq G$ , it can be shown that the ideal in \eqref{eq: SESNorm} is an algebraic $G$-$\sigma$-ideal, as defined in \cite[Definition 4.1]{GSSSAII} if one treats $G$ as a discrete group.
Here we shall only need and justify a particular consequence of this fact, namely that the short exact sequence \eqref{eq: SESNorm} restricts to a short exact sequence of the involved fixed point algebras.
Given an element $x\in\mathcal{M}(D_{\infty,A})$ with $\widetilde{\alpha}_{\infty,g}(x)=x$ for all $g\in G$, we claim that it admits a lift to a fixed point in $\mathcal{N}(D_{\infty,A},A_\infty)$.
First we can lift it to an arbitrary element $y\in\mathcal{N}(D_{\infty,A},A_\infty)$.
Using a sequence $(e_n)_n$ as above, we see that $ye_n\in D_{\infty, A}$. In particular, this gives that $ye_n=xe_n$ and 
$\alpha_{\infty,g}(ye_n)=\widetilde{\alpha}_{\infty,g}(xe_n)=x\alpha_g(e_n)$ for all $n\in\mathbb{N}$ and $g\in G$.
As $(e_n)_n$ was an approximate unit consisting of approximate fixed points, we have $ye_n a\to ya$ and $aye_n\to ay$ for all $a\in A$ and $\|\alpha_{\infty,g}(ye_n)-ye_n\|\to 0$ uniformly over compact sets.
By applying a standard reindexation argument, this allows us to construct an element $z\in A_\infty$ with $z-y\in A_\infty\cap A^\perp$ (therefore $z\in\mathcal{N}(D_{\infty,A},A_\infty)$) and $\alpha_{\infty,g}(z)=z$ for all $g\in G$.
\end{rmk}

\subsection{Actions by abelian groups}\label{subsect: Actions}

Let $A$ be a separable \cstar-algebra, $G$ a second-countable, locally compact, abelian group, and $\alpha\colon G\curvearrowright A$ a continuous action.
We denote by $\iota_A$ the canonical inclusion of $A$ into $\mathcal{M}(A\rtimes_\alpha G)$ and by $\lambda^\alpha\colon G\to\mathcal{U}(\mathcal{M}(A\rtimes_\alpha G))$ the canonical unitary representation.
Note that if $G$ is discrete, then $\iota_A(A)\subseteq A\rtimes_\alpha G$.
Since $G$ is abelian, for any $g\in G$, we can define a $G$-action $\alpha^g\colon G\curvearrowright A\rtimes_{\alpha_g}\mathbb{Z}$ by
\[
\alpha_h^g(a\lambda_g^n)=\alpha_h(a)\lambda_g^n \quad\text{for } h\in G,\ a\in A,\ \text{and } n\in\mathbb{Z}.
\]
Here we denote the generating unitary of $A\rtimes_{\alpha_g}\mathbb{Z}$ by $\lambda_g$.


\section{The abelian Rokhlin property}

\begin{defn}\label{defn: AbelianRokhlin}
Let $A$ be a separable \cstar-algebra and $G$ a second-countable, locally compact, abelian group.
Let $\alpha\colon G\curvearrowright A$ be a continuous action.
Then we say that $\alpha$ has the abelian Rokhlin property if for any $\chi\in \widehat{G}$, there exists a unitary $u\in F_\infty(A)$ such that $\widetilde{\alpha}_{\infty,g}(u)=\chi(g)u$ for any $g\in G$.
Moreover, we say that $\alpha$ has the rational abelian Rokhlin property if $\alpha$ is equivariantly $\mathcal{Z}$-stable\footnote{Here $\mathcal{Z}$ stands for the Jiang-Su algebra introduced in \cite{JiangSu99}.} (that is $\alpha$ is cocycle conjugate to $\alpha\otimes\id_{\mathcal{Z}}$) and there exist distinct prime numbers $p,q$ such that $\alpha\otimes\id_{M_{p^\infty}}$ and $\alpha\otimes\id_{M_{q^\infty}}$ have the abelian Rokhlin property.
\end{defn}

We will now collect a series of remarks regarding the abelian Rokhlin property and its rational counterpart.

\begin{rmk}\label{rmk: AbRokFinite}
Let $\alpha\colon G\curvearrowright A$ be as above.
Given an element $\chi\in\widehat{G}$, we may define the action $\sigma^\chi\colon G\curvearrowright C(\mathbb{T})$ given by $\sigma^\chi_g(f)(z)=f(\chi(g)z)$ for any $g\in G$, $f\in C(\mathbb{T})$, and $z\in\mathbb{T}$. 
Then $\alpha$ has the abelian Rokhlin property if and only if for any $\chi\in\widehat{G}$, there exists a unital, equivariant $*$-homomorphism 
\[
\varphi_\chi\colon (C(\mathbb{T}),\sigma^\chi)\to (F_\infty(A),\widetilde{\alpha}_\infty).
\] 
As we comment in Remark \ref{rmk: IzumiApproxInner}, the maps $\varphi_\chi$ are not required to form a representation of the dual group $\widehat{G}$.
Assuming such an extra condition would be equivalent to assuming that the action $\alpha$ has the usual Rokhlin property for finite abelian groups (see for example the proof of \cite[Lemma 3.8]{Izu04}). 
\end{rmk}

\begin{rmk}\label{rmk: AbRokhIndepParam}
As a consequence of Lemma \ref{lemma: AbRokhSeqSplit} and Lemma \ref{lemma: RationalAbRokh}, the rational abelian Rokhlin property is in fact independent of the choice of distinct prime numbers $p,q$. 
Moreover, since any requirement on the structure of $\alpha\otimes \id_{M_{r^\infty}}$ is a property of the action $\alpha\otimes\id_{\mathcal{Z}}\otimes \id_{M_{r^\infty}}$, we have chosen to include equivariant $\mathcal{Z}$-stability as part of its definition.
This extra assumption does not cause any loss of generality in the results appearing in this article, as it can be seen from Theorem \ref{thm: AbRokhEquivDef} or Theorem \ref{thm: Traces}.
\end{rmk}

\begin{rmk}\label{rmk: AbRokhProp}
Let $A,B$ be separable \cstar-algebras and $\alpha\colon G\curvearrowright A$ and $\beta\colon G\curvearrowright B$ be two continuous actions of a second-countable, locally compact, abelian group $G$. 
Then, using the canonical inclusion $F_\infty(A)\subset F_\infty(A\otimes B)$, where $\otimes$ stands for either the minimal or the maximal tensor product, one can see that if $\alpha$ has the abelian Rokhlin property, then so does $\alpha\otimes\beta$.
\end{rmk}

\begin{rmk}\label{rmk: AbRokh}
In the case when $G=\mathbb{R}^k$, the abelian Rokhlin property is nothing but the known Rokhlin property.
In the case $k=1$, the Rokhlin property was defined by Kishimoto in the unital case \cite{KishRokhlinFlows} and extended to the nonunital case in \cite{SzaRokhlinFlows}.
The definition for $k\geq 2$ can be found in \cite[Definition 6.2]{GSRokhDimAbs}. When $k\geq 2$, the equivalence between the Rokhlin property and the abelian Rokhlin property follows from \cite[Proposition 6.5]{GSRokhDimAbs}.
\end{rmk}

When the group $G$ is the integer group, the abelian Rokhlin property has a different flavour compared to the finite group and multiflow cases.
We shall demonstrate that it is closely related to other dynamical properties such as finite Rokhlin dimension with commuting towers.
Let us first recall the definition of the latter for integer actions, which we identify with the single automorphisms generating them.
This notion was first introduced in \cite[Definition 2.3]{HWZ15} in the case when the \cstar-algebra is unital and extended to also cover the non-unital case in \cite[Definition 1.21]{HP15}.
In the particular case of $\mathbb{Z}$-actions, there are several different definitions for finite Rokhlin dimension depending on what kind of towers one allows.
Here, we will use the definition of finite Rokhlin dimension with commuting towers from \cite{SWZRokhDim19}, which one might informally refer to as ``Rokhlin dimension with single towers''.

\begin{defn}[{\cite[Definition 10.2]{SWZRokhDim19}}]\label{defn: RokhDim}
Let $A$ be a separable \cstar-algebra, $d \in \mathbb{N}$, and $n\in\mathbb{N}$.
An automorphism $\alpha$ of $A$ is said to have \emph{Rokhlin dimension $d$ with commuting towers of length $n$}, written $\dimrokc(\alpha,n\mathbb{Z})=d$, if $d$ is the least natural number such that there exist equivariant c.p.c.\ order zero maps
\[
\varphi_\ell\colon (C(\mathbb{Z}/n\mathbb{Z}), \mathbb{Z}\text{-shift})\to (F_\infty(A),\widetilde{\alpha}_\infty)
\]
for $\ell=0,\ldots,d$ with pairwise commuting ranges and such that
\[
1=\varphi_0(1)+\ldots+\varphi_d(1).
\]
We then define the Rokhlin dimension with commuting towers of $\alpha$ by 
\[
\dimrokc(\alpha)=\sup\limits_{n\in\mathbb{N}}\ \dimrokc(\alpha,n\mathbb{Z}).
\]
\end{defn}

We will need the following lemma.
Before proving it, we recall the notion of equivariantly sequentially split $*$-homomorphisms from \cite{SeqSplit}.

\begin{defn}[{\cite[Definition 3.3]{SeqSplit}}]\label{defn: SeqSplit}
Let $A$ and $B$ be separable \cstar-algebras and $G$ be a locally compact group.
Let $\alpha\colon G\curvearrowright A$ and $\beta\colon G \curvearrowright B$ be two continuous actions.
An equivariant $*$-homomorphism $\varphi\colon (A,\alpha) \to (B,\beta)$ is called ($G$-equivariantly) sequentially split, if there exists a commutative diagram of equivariant $*$-homomorphisms of the form
\[
\xymatrix{
(A,\alpha)\ar[dr]_{\varphi} \ar[rr] &&(A_{\infty,\alpha},\alpha_\infty), \\&(B,\beta)\ar[ru]&
}
\]
where the horizontal map is the canonical diagonal inclusion.   
\end{defn}

We use this framework as a technical gadget to perform certain proofs efficiently.
We will often use without further mention the fact that the composition of two equivariantly sequentially split maps is equivariantly sequentially split (see \cite[Proposition 3.7]{SeqSplit}), and the fact that if some composition $\psi\circ\varphi$ is equivariantly sequentially split, then so is $\varphi$; the latter is a direct consequence of the definition.
We shall also frequently use the following fact:

\begin{lemma}[see {\cite[Lemma 4.2]{SeqSplit}}] \label{lemma: central-ss}
Let $A$ and $C$ be separable \cstar-algebras and $G$ be a second-countable, locally compact group.
Suppose that $C$ is unital.
Let $\alpha\colon G\curvearrowright A$ and $\gamma\colon G \curvearrowright C$ be two continuous actions.
There exists an equivariant unital $*$-homomorphism from $(C,\gamma)$ to $(F_\infty(A),\widetilde{\alpha}_\infty)$ if and only if the first-factor embedding
\[
\id_A\otimes 1_C: (A,\alpha)\to (A\otimes_{\max} C,\alpha\otimes\gamma)
\]
is equivariantly sequentially split.
\end{lemma}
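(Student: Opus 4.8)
The plan is to prove both implications by passing through the standard identification of $F_\infty(A)$ with a relative commutant inside the multiplier algebra. Intersecting the short exact sequence \eqref{eq: SESNorm} with the commutant of $A$ and using that $A_\infty\cap A^\perp\subseteq A_\infty\cap A'$, one sees that $A_\infty\cap A'$ maps onto $\mathcal{M}(D_{\infty,A})\cap A'$ with kernel $A_\infty\cap A^\perp$, so that $F_\infty(A)$ is equivariantly isomorphic to $\mathcal{M}(D_{\infty,A})\cap A'$. Here $A$ sits inside $\mathcal{M}(D_{\infty,A})$ via the constant embedding $A\hookrightarrow D_{\infty,A}$, the action $\widetilde{\alpha}_\infty$ is the restriction of the (strictly extended) multiplier action, and the unit of $F_\infty(A)$ corresponds to $1_{\mathcal{M}(D_{\infty,A})}$. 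All of the work below takes place under this picture.

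For the forward direction, suppose $\theta\colon (C,\gamma)\to (F_\infty(A),\widetilde{\alpha}_\infty)$ is unital and equivariant, and regard it as a unital equivariant $*$-homomorphism $\theta\colon C\to \mathcal{M}(D_{\infty,A})\cap A'$. Then the constant embedding $A\hookrightarrow\mathcal{M}(D_{\infty,A})$ and $\theta$ have commuting ranges, so the universal property of the maximal tensor product produces a $*$-homomorphism $\rho\colon A\otimes_{\max}C\to\mathcal{M}(D_{\infty,A})$ determined on elementary tensors by $\rho(a\otimes c)=a\,\theta(c)$. Since $a\in A\subseteq D_{\infty,A}$ and $D_{\infty,A}$ is an ideal in $\mathcal{M}(D_{\infty,A})$, each value $\rho(a\otimes c)$ already lies in $D_{\infty,A}\subseteq A_\infty$, hence $\rho$ maps into $A_\infty$; as $\rho$ is equivariant and $\alpha\otimes\gamma$ is point-norm continuous, its image in fact lies in the continuous part $A_{\infty,\alpha}$. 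Because $\rho(a\otimes 1)=a\,\theta(1)=a$, the map $\rho$ is the desired equivariant lift, witnessing that $\id_A\otimes 1_C$ is equivariantly sequentially split.

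For the converse, let $\psi\colon (A\otimes_{\max}C,\alpha\otimes\gamma)\to (A_{\infty,\alpha},\alpha_\infty)$ be equivariant with $\psi(a\otimes 1)=a$. Choosing an approximate unit $(e_n)_n$ of $A$ and using the identity $e_m\,\psi(a\otimes c)\,e_n=\psi(e_m a e_n\otimes c)$, I would first check that $\psi$ maps into $D_{\infty,A}$ and is nondegenerate there, since its image already contains $A$, which acts nondegenerately on $D_{\infty,A}$. Thus $\psi$ extends strictly to a unital equivariant $*$-homomorphism $\bar\psi\colon\mathcal{M}(A\otimes_{\max}C)\to\mathcal{M}(D_{\infty,A})$. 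As $C$ is unital, $c\mapsto 1\otimes c$ is a unital equivariant embedding into $\mathcal{M}(A\otimes_{\max}C)$, and I would set $\theta(c)\coloneqq\bar\psi(1\otimes c)$. Since $1\otimes c$ commutes with $a\otimes 1$ and $\bar\psi(a\otimes 1)=a$, one computes $\theta(c)a=\bar\psi(a\otimes c)=a\theta(c)$, so $\theta$ takes values in $\mathcal{M}(D_{\infty,A})\cap A'\cong F_\infty(A)$; it is unital because $\bar\psi(1)=1$ and equivariant because both $\bar\psi$ and $c\mapsto 1\otimes c$ are.

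The main obstacle is the bookkeeping forced by the possible non-unitality of $A$: correctly identifying $F_\infty(A)$ with the relative commutant $\mathcal{M}(D_{\infty,A})\cap A'$ from \eqref{eq: SESNorm}, verifying that the relevant maps are nondegenerate onto $D_{\infty,A}$ so that they extend to the multiplier algebras, and tracking equivariance through these strict extensions. I also expect the use of $\otimes_{\max}$ to be essential rather than cosmetic, since it is precisely the universal property for pairs of commuting representations that yields $\rho$ in the forward direction.
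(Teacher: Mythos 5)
Your argument is correct; note that the paper does not prove this lemma at all but simply cites \cite[Lemma 4.2]{SeqSplit}, and your proof is essentially a faithful reconstruction of the standard argument behind that citation: identifying $F_\infty(A)$ with $\mathcal{M}(D_{\infty,A})\cap A'$ via the short exact sequence \eqref{eq: SESNorm} (the surjectivity onto the relative commutant follows since any lift $y\in\mathcal{N}(D_{\infty,A},A_\infty)$ of an $x\in\mathcal{M}(D_{\infty,A})\cap A'$ satisfies $ya-ay=xa-ax=0$ in $D_{\infty,A}$), then using the universal property of $\otimes_{\max}$ in one direction and the strict extension of the nondegenerate map $\psi$ in the other. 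The only point worth spelling out further is the surjectivity step just mentioned, which you assert but do not verify; it is immediate from \eqref{eq: SESNorm} as indicated.
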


\begin{lemma}\label{lemma: RokhDimBound}
Let $A,B$ be separable \cstar-algebras, $\alpha\in\Aut(A)$, and $\beta\in\Aut(B)$.
Suppose that $B$ is unital and that there exists an equivariant unital $*$-homomorphism from $(B,\beta)$ to $(F_\infty(A),\widetilde{\alpha}_\infty)$.
Then $\dimrokc(\alpha)\leq \dimrokc(\beta)$.
\end{lemma}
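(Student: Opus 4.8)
The plan is to transport the Rokhlin towers witnessing $\dimrokc(\beta)$ from the central sequence algebra $F_\infty(B)$ into $F_\infty(A)$ along a unital equivariant $*$-homomorphism induced by the given map. We may assume $d:=\dimrokc(\beta)<\infty$, and it suffices to prove $\dimrokc(\alpha,n\mathbb{Z})\le d$ for every fixed $n\in\mathbb{N}$ and then take the supremum over $n$. So fix such an $n$ and denote by $\Phi\colon (B,\beta)\to (F_\infty(A),\widetilde{\alpha}_\infty)$ the given equivariant unital $*$-homomorphism. Since $d\ge\dimrokc(\beta,n\mathbb{Z})$, Definition \ref{defn: RokhDim} provides equivariant c.p.c.\ order zero maps $\psi_0,\dots,\psi_d\colon (C(\mathbb{Z}/n\mathbb{Z}),\mathbb{Z}\text{-shift})\to (F_\infty(B),\widetilde{\beta}_\infty)$ with pairwise commuting ranges and $\sum_{\ell=0}^d\psi_\ell(1)=1$.

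The technical heart of the argument is to produce a unital equivariant $*$-homomorphism $\Theta\colon (F_\infty(B),\widetilde{\beta}_\infty)\to (F_\infty(A),\widetilde{\alpha}_\infty)$ induced by $\Phi$, or at least its restriction to the separable $\widetilde{\beta}_\infty$-invariant subalgebra $\mathcal{S}\subseteq F_\infty(B)$ generated by the finitely many ranges $\psi_\ell(C(\mathbb{Z}/n\mathbb{Z}))$, which contains the unit because $\sum_\ell\psi_\ell(1)=1$. Each element of $\mathcal{S}$ is represented by a bounded central sequence $(b_k)_k$ in $B$, and each $\Phi(b_k)\in F_\infty(A)$ is in turn represented by a central sequence in $A$; the map $\Theta$ is then obtained by a standard diagonal reindexation (Kirchberg's $\varepsilon$-test over a countable dense subset of $\mathcal{S}$, together with the finitely many algebraic, centrality and equivariance relations), exactly as in the reindexation argument of Remark \ref{rmk: SigmaIdeal}. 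Since $B$ is unital, $1_{F_\infty(B)}$ is represented by the constant sequence $1_B$, whence $\Theta(1)=\Phi(1_B)=1$; and since $G=\mathbb{Z}$ is generated by a single automorphism, equivariance amounts to intertwining the two induced automorphisms, a single relation per element that is guaranteed by the equivariance of $\Phi$. Alternatively, one may obtain such an induced map through the sequentially split framework: by Lemma \ref{lemma: central-ss} the first-factor embedding $\id_A\otimes 1_B\colon (A,\alpha)\to (A\otimes_{\max} B,\alpha\otimes\beta)$ is equivariantly sequentially split, and the corresponding splitting reindexes to a unital equivariant map $F_\infty(A\otimes_{\max} B)\to F_\infty(A)$, which one precomposes with the canonical unital equivariant inclusion $F_\infty(B)\hookrightarrow F_\infty(A\otimes_{\max} B)$ of the kind exploited in Remark \ref{rmk: AbRokhProp}.

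Once $\Theta$ is in hand, set $\varphi_\ell:=\Theta\circ\psi_\ell$ for $\ell=0,\dots,d$. Each $\varphi_\ell$ is again an equivariant c.p.c.\ order zero map, since composing a c.p.c.\ order zero map with a unital equivariant $*$-homomorphism preserves both the order zero relation and equivariance. The ranges $\varphi_\ell(C(\mathbb{Z}/n\mathbb{Z}))$ pairwise commute because $\Theta$ is multiplicative and the $\psi_\ell$ have commuting ranges, and $\sum_{\ell=0}^d\varphi_\ell(1)=\Theta\bigl(\sum_{\ell=0}^d\psi_\ell(1)\bigr)=\Theta(1)=1$. Hence these maps witness $\dimrokc(\alpha,n\mathbb{Z})\le d$, and taking the supremum over $n$ yields $\dimrokc(\alpha)\le d=\dimrokc(\beta)$.

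I expect the main obstacle to be the construction of the induced map $\Theta$: one must verify that a single diagonal sequence can be chosen so that $\Theta$ is simultaneously well defined, unital, multiplicative, $*$-preserving and equivariant on all of $\mathcal{S}$, rather than merely on finitely many generators. This is precisely the role of the $\varepsilon$-test and $\sigma$-ideal reindexation machinery, and the separability of $B$ (and hence of the relevant test sets) is what makes it go through.
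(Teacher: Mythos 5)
Your proposal is correct, and its overall strategy---push the Rokhlin towers forward along a unital equivariant $*$-homomorphism defined on a separable unital subalgebra of the central sequence algebra of $B$---is the same as the paper's. Where you differ is in how that induced map is produced. The paper never performs a diagonal/reindexation argument by hand: it takes $C\subseteq B_\infty\cap B'$ to be the unital subalgebra generated by the towers for \emph{all} lengths $n$ simultaneously, observes via Lemma~\ref{lemma: central-ss} that both $\id_B\otimes 1_C$ and $\id_A\otimes 1_B$ are equivariantly sequentially split, and then uses the purely formal calculus of such maps (\cite[Theorem 2.8(IV)]{SeqSplit} together with the factorisation $\id_A\otimes 1_B\otimes 1_C=(\id_A\otimes 1_B\otimes\id_C)\circ(\id_A\otimes 1_C)$) to conclude that $\id_A\otimes 1_C$ is equivariantly sequentially split, whence Lemma~\ref{lemma: central-ss} returns the desired unital equivariant map $(C,\gamma)\to(F_\infty(A),\widetilde{\alpha}_\infty)$ in one stroke. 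Your primary route instead builds $\Theta$ on $\mathcal{S}$ by a diagonal argument; this can be made to work, but it is the most labour-intensive part of your write-up and is precisely what the paper's bookkeeping is designed to avoid---note in particular that for non-unital $A$ the target $F_\infty(A)$ is a quotient of $A_\infty\cap A'$ by the annihilator $A_\infty\cap A^\perp$, so the well-definedness, multiplicativity and equivariance conditions in your $\varepsilon$-test must all be phrased modulo that $\sigma$-ideal, in the spirit of Remark~\ref{rmk: SigmaIdeal}. Your alternative route (sequential splitness of $\id_A\otimes 1_B$ inducing a unital equivariant map on central sequence algebras, precomposed with $F_\infty(B)\hookrightarrow F_\infty(A\otimes_{\max}B)$) is essentially the paper's argument in disguise. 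Finally, fixing $n$ and taking the supremum at the end is perfectly fine; the paper's choice of a single $C$ containing all towers merely lets one invocation of the machinery cover every $n$ at once.
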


\begin{proof}
Denote such a $*$-homomorphism by $\theta$.
Let $\dimrokc(\beta)=d$ for some $d\in\mathbb{N}$. Then for each $n\in\mathbb{N}$, there exist equivariant c.p.c.\ order zero maps 
\[
\varphi_0^{(n)},\ldots,\varphi_d^{(n)}\colon (C(\mathbb{Z}/n\mathbb{Z}),\mathbb{Z}\text{-shift})\to (B_\infty\cap B',\beta_\infty)
\]
with pairwise commuting ranges such that $\varphi_0^{(n)}(1)+\ldots+\varphi_d^{(n)}(1)=1$.

Let $C$ be the \cstar-algebra generated by the images of the maps $\varphi_j^{(n)}$ for all $0\leq j\leq d$ and all $n\in\mathbb N$.
Clearly $C$ is $\beta_\infty$-invariant and we denote $\gamma=\beta_\infty|_C\in\Aut(C)$.
Then the map 
\[
\id_B\otimes 1_C\colon (B,\beta)\to (B\otimes_{\max}C,\beta\otimes\gamma)
\]
is equivariantly sequentially split by Lemma~\ref{lemma: central-ss}.
Since $\id_A$ is (trivially) equivariantly sequentially split, so is $\id_A\otimes\id_B\otimes 1_C$ by \cite[Theorem 2.8(IV)]{SeqSplit} (note that exactly the same proof works in the equivariant setting).

Moreover, another application of Lemma~\ref{lemma: central-ss} shows that the map $\id_A\otimes 1_B$ is equivariantly sequentially split.
Hence, the composition $(\id_A\otimes\id_B\otimes 1_C)\circ (\id_A\otimes 1_B)$ is equivariantly sequentially split.
As 
\begin{equation}\label{eq: RokhDimSeqSplit}
\begin{array}{ccl}
(\id_A\otimes\id_B\otimes 1_C)\circ (\id_A\otimes 1_B) &=& \id_A\otimes 1_B\otimes 1_C \\
&=& (\id_A\otimes 1_B\otimes \id_C)\circ (\id_A\otimes 1_C), 
\end{array}   
\end{equation}
we get that $(\id_A\otimes 1_C)$ is equivariantly sequentially split.
Hence there exists a unital and equivariant $*$-homomorphism $(C,\gamma)\to (F_\infty(A),\widetilde{\alpha}_\infty)$ by Lemma~\ref{lemma: central-ss}.
Since $C$ was generated by systems of Rokhlin towers of arbitrary length, this shows that $\dimrokc(\alpha)\leq d$.
\end{proof}

\begin{prop}\label{prop: RokhDim}
Let $A$ be a separable \cstar-algebra and $\alpha\in\Aut(A)$ a single automorphism with the abelian Rokhlin property.
Then $\dimrokc(\alpha)\leq 1$.   
\end{prop}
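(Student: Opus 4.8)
The plan is to reduce to a single tower length and build the two Rokhlin towers by functional calculus from one eigenunitary. Since $\dimrokc(\alpha)=\sup_{n}\dimrokc(\alpha,n\mathbb Z)$, it suffices to fix $n\in\mathbb N$ and exhibit two equivariant c.p.c.\ order zero maps $(C(\mathbb Z/n\mathbb Z),\mathbb Z\text{-shift})\to(F_\infty(A),\widetilde\alpha_\infty)$ with commuting ranges whose units add to $1$. The key observation is that for a $\mathbb Z$-action the abelian Rokhlin property (Definition~\ref{defn: AbelianRokhlin}) only ever hands us a single \emph{eigenunitary} per character of $\widehat{\mathbb Z}\cong\mathbb T$, rather than a system of orthogonal projections; everything will be extracted from such a unitary.

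First I would apply the abelian Rokhlin property to the character $\chi_n\in\widehat{\mathbb Z}$ with $\chi_n(1)=e^{2\pi i/n}$, obtaining a unitary $u_n\in F_\infty(A)$ with $\widetilde\alpha_\infty(u_n)=e^{2\pi i/n}u_n$. As $\mathbb Z$ is discrete, $\widetilde\alpha_\infty$ is a genuine $*$-automorphism of $F_\infty(A)$ and hence commutes with continuous functional calculus, so $f\mapsto f(u_n)$ is a unital $*$-homomorphism $\psi_n\colon C(\mathbb T)\to F_\infty(A)$ with $\widetilde\alpha_\infty(f(u_n))=f(e^{2\pi i/n}u_n)$. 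In other words $\psi_n$ is equivariant from the rotation $\sigma^{(n)}$ of $\mathbb Z$ on $C(\mathbb T)$ by the $n$-th root of unity $e^{2\pi i/n}$ into $(F_\infty(A),\widetilde\alpha_\infty)$; this is exactly the reformulation of Remark~\ref{rmk: AbRokFinite}, now specialised so that the order of the root of unity matches the intended tower length.

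The core of the argument is a hands-on construction, inside the commutative algebra $C(\mathbb T)=C(\mathbb R/\mathbb Z)$, of two equivariant c.p.c.\ order zero maps $\varphi_0,\varphi_1\colon(C(\mathbb Z/n\mathbb Z),\text{shift})\to(C(\mathbb T),\sigma^{(n)})$ with $\varphi_0(1)+\varphi_1(1)=1$, where $\sigma^{(n)}$ is rotation by $1/n$. Such a map is determined by a single positive bump of norm $\le1$ supported in a fundamental domain for the rotation, its $j/n$-translates serving as the images of the minimal projections: the translates are automatically orthogonal and cyclically permuted by $\sigma^{(n)}$, and their sum is the $1/n$-periodisation of the bump. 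Choosing a $1/n$-periodic tent function $\tilde p$ with $\tilde p(j/n)=0$ and $\tilde p\big(\tfrac{2j+1}{2n}\big)=1$, I would build $\varphi_0$ from $p\coloneqq\tilde p|_{[0,1/n]}$ and $\varphi_1$ from $q\coloneqq(1-\tilde p)|_{[1/(2n),3/(2n)]}$; both restrictions vanish at the endpoints of their fundamental domains and so define continuous bumps, the supports within each colour are disjoint, and the two tower units add to $\tilde p+(1-\tilde p)=1$. Composing with $\psi_n$ yields $\psi_n\circ\varphi_0,\psi_n\circ\varphi_1$, which are order zero and equivariant because $\psi_n$ is an equivariant $*$-homomorphism, have commuting ranges (both lie in the commutative algebra $C^*(u_n)$), and satisfy $\psi_n\varphi_0(1)+\psi_n\varphi_1(1)=\psi_n(1)=1$. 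This gives $\dimrokc(\alpha,n\mathbb Z)\le1$, and taking the supremum over $n$ proves the proposition.

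The conceptual point, and the only genuine obstruction to doing better, is that the abelian Rokhlin property supplies a \emph{unitary} and not a system of orthogonal projections: a single eigenunitary behaves like a copy of the connected space $\mathbb T$, and continuous functions of it cannot be sharp indicators of the arcs $[j/n,(j+1)/n]$. The unavoidable overlaps at the $n$ division points cannot be absorbed into one tower, which is precisely why one pays a single extra colour and lands at Rokhlin dimension $1$ rather than $0$. The main thing to get right in the write-up is therefore just this elementary two-colour partition of unity, together with matching the tower length $n$ to the order of the root of unity so that the wrap-around relation $\widetilde\alpha_\infty(h_{n-1})=h_0$ is consistent; everything else is a formal consequence of the equivariance of functional calculus. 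As an alternative I could instead invoke Lemma~\ref{lemma: RokhDimBound} with a single irrational rotation on $C(\mathbb T)$, but that route forces one to build only \emph{approximate} towers inside the sequence algebra, so the present per-$n$ construction with roots of unity is the cleaner path.
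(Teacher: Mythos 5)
Your proof is correct, but it takes a genuinely different route from the paper. The paper's argument is a two-line reduction: it picks a single irrational character $\chi=e^{2\pi i\theta}$, uses the abelian Rokhlin property (via Remark~\ref{rmk: AbRokFinite}) to get a unital equivariant $*$-homomorphism $(C(\mathbb T),\sigma^\chi)\to(F_\infty(A),\widetilde\alpha_\infty)$, quotes $\dimrokc(\sigma^\chi)=1$ from \cite[Theorem 6.2]{HWZ15}, and concludes by the permanence result Lemma~\ref{lemma: RokhDimBound} --- i.e.\ exactly the ``alternative'' you mention and discard in your last sentence. You instead work per tower length $n$, take the character given by the $n$-th root of unity, and build the two towers explicitly by functional calculus in the eigenunitary $u_n$, using a $1/n$-periodic tent function $\tilde p$ and its complement to produce two families of pairwise orthogonal positive contractions cyclically permuted by $\widetilde\alpha_\infty$ and summing to $1$. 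This is sound: the translates of the bump are orthogonal because they vanish at the overlap points, equivariance follows from $\widetilde\alpha_\infty(f(u_n))=f(e^{2\pi i/n}u_n)$, commutation of the ranges is automatic since everything lives in the commutative algebra $\mathrm{C}^*(u_n,1)$, and the fact that $\sigma(u_n)$ need not be all of $\mathbb T$ is harmless for the functional calculus. What your route buys is a self-contained, elementary proof that makes visible where the extra colour comes from (the connectedness of $\mathbb T$ forcing overlaps at the division points), at the cost of redoing by hand the circle-rotation computation that \cite{HWZ15} already provides; what the paper's route buys is brevity and a single uniform witness (one irrational eigenunitary) rather than one eigenunitary per $n$. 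The only things to pin down in a full write-up are the orientation convention (replace $u_n$ by $u_n^*$ if the cyclic permutation runs the wrong way) --- a triviality you already flag.
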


\begin{proof}
Let $\theta\in\mathbb{R}\setminus\mathbb Q$ and set $\chi=e^{2\pi i\theta}\in\mathbb{T}$.
Since $\alpha$ has the abelian Rokhlin property, there exists a unital equivariant $*$-homomorphism $\varphi\colon(C(\mathbb{T}),\sigma^\chi)\to (F_\infty(A),\widetilde{\alpha}_\infty)$.
Moreover, $\dimrokc(\sigma^\chi)=1$ by \cite[Theorem 6.2]{HWZ15}. Hence, $\dimrokc(\alpha)\leq 1$ by Lemma \ref{lemma: RokhDimBound}.
\end{proof}

One can obtain a partial converse to Proposition \ref{prop: RokhDim}.
Before proving this converse, we will first characterise the abelian Rokhlin property using the notion of equivariantly sequentially split morphisms.

\begin{lemma} \label{lemma: AbRokhSeqSplit}
Let $A$ be a separable \cstar-algebra and $G$ be a second-countable, locally compact, abelian group.
Let $\alpha\colon G\curvearrowright A$ be a continuous action.
The action $\alpha$ has the abelian Rokhlin property if and only if for any $\chi\in\widehat{G}$, the equivariant $*$-homomorphism
\[
\id_A\otimes 1\colon (A,\alpha)\hookrightarrow (A\otimes C(\mathbb{T}),\alpha\otimes\sigma^\chi)
\]
is $G$-equivariantly sequentially split, where $\sigma^\chi$ is the $G$-action given by $\sigma_g^\chi(f)(z)=f(\chi(g)z)$ for any $g\in G$, $f\in C(\mathbb{T})$, $z\in\mathbb{T}$.
\end{lemma}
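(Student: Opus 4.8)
The statement is an equivalence characterizing the abelian Rokhlin property via the notion of equivariantly sequentially split morphisms. The key tool is clearly Lemma~\ref{lemma: central-ss}, which converts statements about equivariant unital $*$-homomorphisms into $F_\infty(A)$ into statements about sequential splitness of first-factor embeddings. So the plan is essentially to reduce both directions to that lemma, after reconciling the two descriptions of what it means for the target to receive the right structure.

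First I would recall, via Remark~\ref{rmk: AbRokFinite}, that $\alpha$ has the abelian Rokhlin property if and only if for every $\chi \in \widehat{G}$ there is a unital equivariant $*$-homomorphism $\varphi_\chi \colon (C(\mathbb{T}), \sigma^\chi) \to (F_\infty(A), \widetilde{\alpha}_\infty)$. The point is that the unitary $u \in F_\infty(A)$ satisfying $\widetilde{\alpha}_{\infty,g}(u) = \chi(g)u$ is precisely the image of the standard generating unitary $z \in C(\mathbb{T})$ under such a map, and conversely the universal property of $C(\mathbb{T})$ as the universal $\cstar$-algebra generated by a unitary produces $\varphi_\chi$ from $u$; equivariance of $\varphi_\chi$ corresponds exactly to the eigenvector relation $\sigma^\chi_g(z) = \chi(g)z$. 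Thus the abelian Rokhlin property is equivalent to the existence, for every $\chi$, of a unital equivariant $*$-homomorphism from $(C(\mathbb{T}),\sigma^\chi)$ into $(F_\infty(A),\widetilde{\alpha}_\infty)$.

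Now I would apply Lemma~\ref{lemma: central-ss} directly, taking $C = C(\mathbb{T})$ and $\gamma = \sigma^\chi$, which is a legitimate choice since $C(\mathbb{T})$ is unital and $\sigma^\chi$ is a continuous $G$-action. The lemma then states that the existence of a unital equivariant $*$-homomorphism $(C(\mathbb{T}),\sigma^\chi) \to (F_\infty(A),\widetilde{\alpha}_\infty)$ is equivalent to the first-factor embedding
\[
\id_A \otimes 1 \colon (A,\alpha) \to (A \otimes_{\max} C(\mathbb{T}), \alpha \otimes \sigma^\chi)
\]
being $G$-equivariantly sequentially split. Combining this with the reformulation from the previous paragraph and quantifying over all $\chi \in \widehat{G}$ yields exactly the claimed equivalence.

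The one genuine subtlety to address is the appearance of $\otimes_{\max}$ in Lemma~\ref{lemma: central-ss} versus the unspecified (minimal) tensor product in the statement to be proved: I would note that $C(\mathbb{T})$ is nuclear, so $A \otimes_{\max} C(\mathbb{T}) = A \otimes C(\mathbb{T})$ and the two actions coincide, making the distinction immaterial. I expect this to be the only point requiring explicit comment; the rest of the argument is a direct transcription through Lemma~\ref{lemma: central-ss} and Remark~\ref{rmk: AbRokFinite}.
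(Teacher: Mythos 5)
Your proof is correct and is precisely the paper's argument: the authors also prove this by combining Remark~\ref{rmk: AbRokFinite} (translating the eigenvector condition into a unital equivariant map out of $(C(\mathbb{T}),\sigma^\chi)$) with Lemma~\ref{lemma: central-ss}. Your additional remark that nuclearity of $C(\mathbb{T})$ reconciles $\otimes_{\max}$ with the minimal tensor product is a correct and worthwhile clarification that the paper leaves implicit.
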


\begin{proof}
The statement follows by combining Lemma~\ref{lemma: central-ss} and Remark \ref{rmk: AbRokFinite}.    
\end{proof}

Next we require a preparatory lemma that we prove in the more general setting of second-countable, locally compact groups, since it may be of independent interest in that level of generality.
We say that a separable \cstar-algebra $A$ is \emph{approximately divisible} if there exists a unital $*$-homomorphism $\psi\colon M_2\oplus M_3\to F_\infty(A)$.
A formally stronger notion with the same name was introduced in \cite[Definition 1.2]{BKR92} when $A$ is unital and extended to the non-unital case in \cite[Definition 5.5]{KR00}. 
The following is analogous to an observation in the latter reference, the proof of which is standard but is included for the reader's convenience:

\begin{lemma} \label{lemma: approx-div}
Let $A$ be a separable \cstar-algebra.
Let $p,q\geq 2$ be two natural numbers with $\operatorname{gcd}(p,q)=1$.
The following are equivalent
\begin{enumerate}[label=\textup{(\roman*)}]
\item $A$ is approximately divisible. \label{lemma: approx-div:1}
\item There exists a unital $*$-homomorphism $M_p\oplus M_q\to F_\infty(A)$. \label{lemma: approx-div:2}
\item There exists a unital $*$-homomorphism $(M_p\oplus M_q)^{\otimes\infty}\to F_\infty(A)$. \label{lemma: approx-div:3}
\end{enumerate}
\end{lemma}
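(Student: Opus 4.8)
The plan is to prove the cyclic implications $\ref{lemma: approx-div:3}\Rightarrow\ref{lemma: approx-div:2}\Rightarrow\ref{lemma: approx-div:1}\Rightarrow\ref{lemma: approx-div:3}$, of which the first two are essentially immediate and the last is where the real content lies.

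For $\ref{lemma: approx-div:3}\Rightarrow\ref{lemma: approx-div:2}$, I would simply precompose a unital $*$-homomorphism $(M_p\oplus M_q)^{\otimes\infty}\to F_\infty(A)$ with the first-factor embedding $M_p\oplus M_q\hookrightarrow (M_p\oplus M_q)^{\otimes\infty}$, which is unital. For $\ref{lemma: approx-div:2}\Rightarrow\ref{lemma: approx-div:1}$, the point is that since $\gcd(p,q)=1$ and $p,q\geq 2$, the algebra $M_p\oplus M_q$ unitally contains a copy of $M_2\oplus M_3$ (or more directly, $M_p\oplus M_q$ absorbs such a copy). Concretely, one can build a unital $*$-homomorphism $M_2\oplus M_3\to M_p\oplus M_q$ by a standard multiplicity argument: since $p,q\geq 2$ one can find nonnegative integers realising $p$ and $q$ as suitable combinations allowing a unital embedding of $M_2\oplus M_3$, using that any $M_n$ with $n\geq 2$ unitally contains $M_2\oplus M_{n-2}$ when $n-2\geq 0$ and one can iterate. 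I would then compose with the given map into $F_\infty(A)$ to obtain approximate divisibility.

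The main obstacle is the implication $\ref{lemma: approx-div:1}\Rightarrow\ref{lemma: approx-div:3}$, which requires upgrading a single unital embedding of $M_2\oplus M_3$ into $F_\infty(A)$ to a unital embedding of the infinite tensor power $(M_p\oplus M_q)^{\otimes\infty}$. The key structural fact I would use is that $F_\infty(A)$ has a natural \emph{$\sigma$-ideal} or reindexing structure: there is a unital embedding $F_\infty(A)\hookrightarrow F_\infty(A)$ coming from the standard identification $F_\infty(F_\infty(A))$-type reindexation, so that one can embed $F_\infty(A)\otimes F_\infty(A)$ (more precisely a tensorially central copy of $F_\infty(A)$) back into $F_\infty(A)$. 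This allows an iterative construction: starting from a unital embedding $M_2\oplus M_3\to F_\infty(A)$, one first promotes it to $(M_2\oplus M_3)^{\otimes\infty}\to F_\infty(A)$ via a diagonal-sequence/reindexing argument, using that $F_\infty(A)$ is countably saturated enough to absorb infinite tensor powers of any unitally embeddable unital separable \cstar-algebra.

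Finally, to pass from $(M_2\oplus M_3)^{\otimes\infty}$ to $(M_p\oplus M_q)^{\otimes\infty}$, I would invoke a UHF-type absorption argument. Since $\gcd(p,q)=1$, the infinite tensor power $(M_p\oplus M_q)^{\otimes\infty}$ unitally contains copies of $M_{p^\infty}$ and $M_{q^\infty}$ (and indeed embeds unitally into any suitably large approximately divisible target), and conversely $(M_2\oplus M_3)^{\otimes\infty}$ can be shown to unitally absorb $(M_p\oplus M_q)^{\otimes\infty}$ because both are unital, separable, and contain a common infinite tensor structure; by the reindexing property of $F_\infty(A)$ any unital embedding of $(M_2\oplus M_3)^{\otimes\infty}$ yields one of $(M_p\oplus M_q)^{\otimes\infty}$. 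The delicate point to verify carefully is that the reindexing embedding $F_\infty(A)\otimes_{\max} D\hookrightarrow F_\infty(A)$ for a unitally embeddable separable $D$ is genuinely unital and $*$-preserving; this is exactly the standard central-sequence absorption lemma, and once it is in place the tensor-power bootstrapping is routine.
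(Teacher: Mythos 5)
Your overall architecture matches the paper's in outline: the easy implications are handled the same way, and your bootstrap from a single unital embedding to an infinite tensor power via reindexing in $F_\infty(A)$ is exactly the Kirchberg result the paper cites for \ref{lemma: approx-div:2}$\Leftrightarrow$\ref{lemma: approx-div:3}. However, there is a genuine gap in your passage from $(M_2\oplus M_3)^{\otimes\infty}$ to $(M_p\oplus M_q)^{\otimes\infty}$. The assertion that $(M_p\oplus M_q)^{\otimes\infty}$ unitally contains $M_{p^\infty}$ is false: a unital copy of $M_{p^m}$ ($m\geq 1$) would have to factor through some finite stage $(M_p\oplus M_q)^{\otimes n}\cong\bigoplus_{k=0}^{n}\big(M_{p^kq^{n-k}}\big)^{\oplus\binom{n}{k}}$ and hence embed unitally into the summand $M_{q^n}$, which is impossible since $\operatorname{gcd}(p,q)=1$. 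More importantly, ``both are unital, separable, and contain a common infinite tensor structure'' is not an argument, and reindexing in $F_\infty(A)$ cannot convert a unital embedding of one algebra into a unital embedding of a different one; you must first actually produce a unital $*$-homomorphism $M_p\oplus M_q\to(M_2\oplus M_3)^{\otimes\infty}$ before the reindexing machinery applies. This is the one place where coprimality is genuinely used, and your proposal never uses it correctly.

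The missing step is elementary but essential, and it is how the paper proceeds: since $\operatorname{gcd}(p,q)=1$, every integer $N>pq$ can be written as $N=ap+bq$ with $a,b\geq 0$, so $M_p\oplus M_q$ admits a unital $*$-homomorphism into $M_N$ for every such $N$. Choosing $\ell$ with $2^\ell>pq$, every simple direct summand $M_{2^k3^{\ell-k}}$ of $(M_2\oplus M_3)^{\otimes\ell}$ has matrix size exceeding $pq$, whence $M_p\oplus M_q$ embeds unitally into $(M_2\oplus M_3)^{\otimes\ell}\subset(M_2\oplus M_3)^{\otimes\infty}$; one then applies the equivalence \ref{lemma: approx-div:2}$\Leftrightarrow$\ref{lemma: approx-div:3} to the pair $(p,q)$. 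A smaller imprecision of the same kind occurs in your proof of \ref{lemma: approx-div:2}$\Rightarrow$\ref{lemma: approx-div:1}: the clean argument is that every $n\geq 2$ is of the form $2a+3b$ with $a,b\geq 0$, giving a unital $*$-homomorphism $M_2\oplus M_3\to M_p\oplus M_q$; your iteration via $M_2\oplus M_{n-2}\subseteq M_n$ does not obviously terminate in a unital copy of $M_2\oplus M_3$.
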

\begin{proof}
We note that the equivalence \ref{lemma: approx-div:2}$\Leftrightarrow$\ref{lemma: approx-div:3} is a result of \cite[Corollary 1.13]{KirchbergAbel}.
Two such natural numbers must allow an expression $p=2a+3b$ and $q=2c+3d$ for some natural numbers $a,b,c,d\geq 0$.
This induces a unital $*$-homomorphism $M_2\oplus M_3\to M_p\oplus M_q$, which immediately yields the implication \ref{lemma: approx-div:2}$\Rightarrow$\ref{lemma: approx-div:1}.

Conversely, assume that $A$ is approximately divisible.
Since we already know the equivalence \ref{lemma: approx-div:2}$\Leftrightarrow$\ref{lemma: approx-div:3} for arbitrary pairs $(p,q)$, we can apply it to deduce that there exists a unital $*$-homomorphism $(M_2\oplus M_3)^{\otimes\infty}\to F_\infty(A)$.
Clearly it suffices to argue that there exists a unital $*$-homomorphism $M_p\oplus M_q\to (M_2\oplus M_3)^{\otimes\infty}$.
For this, since $p$ and $q$ are coprime, we note that every natural number $N>pq$ has an expression $N=ap+bq$ for some natural numbers $a,b\geq 0$.
Let $\ell\geq 1$ be big enough that $2^{\ell}>pq$.
This ensures that for all $k\in\{0,\dots,\ell\}$, there exists a unital $*$-homomorphism $M_p\oplus M_q\to M_{2^k 3^{\ell-k}}$.
Hence, we also find a unital $*$-homomorphism from $M_p\oplus M_q$ into
\[
\bigoplus_{k=0}^\ell M_{2^k 3^{\ell-k}}^{\oplus{\ell\choose k}} \cong (M_2\oplus M_3)^{\otimes\ell} \subset (M_2\oplus M_3)^{\otimes\infty}.
\]
Here we used the binomial theorem for tensor powers of direct sums.
\end{proof}

\begin{lemma}\label{lemma: RationalAbRokh}
Let $G$ be a second-countable and locally compact group, let $A, B$ be separable \cstar-algebras and $\alpha\colon G\curvearrowright A$, $\beta\colon G\curvearrowright B$ continuous actions.
Let $\varphi\colon (A,\alpha)\to (B,\beta)$ be an equivariant $*$-homomorphism.
Then the following are equivalent.
\begin{enumerate}[label=\textup{(\roman*)}]
\item There exist two distinct prime numbers $p$ and $q$ such that $\varphi\otimes\id_{M_{p^\infty}}$ and $\varphi\otimes\id_{M_{q^\infty}}$ are equivariantly sequentially split, where we equip $M_{p^\infty}$ and $M_{q^\infty}$ with the corresponding trivial action.\label{item: RationalAbRokh}
\item For every separable, approximately divisible \cstar-algebra $C$, the $*$-homomorphism $\varphi\otimes\id_C$ is equivariantly sequentially split, where we equip $C$ with the trivial action.\label{item: ApproxDiv}
\end{enumerate}
\end{lemma}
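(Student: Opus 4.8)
\emph{Plan.} The implication \ref{item: ApproxDiv}$\Rightarrow$\ref{item: RationalAbRokh} is immediate: the UHF algebras $M_{p^\infty}$ and $M_{q^\infty}$ are separable and approximately divisible. Indeed, each is strongly self-absorbing, so it includes unitally into its own central sequence algebra, and it contains a unital copy of $M_2\oplus M_3$ (write $p^n=2a+3b$ with $a,b\geq 1$ for large $n$); composing gives a unital $*$-homomorphism $M_2\oplus M_3\to F_\infty(M_{p^\infty})$, which is approximate divisibility by Lemma~\ref{lemma: approx-div}. Applying \ref{item: ApproxDiv} to $C=M_{p^\infty}$ and $C=M_{q^\infty}$ then yields \ref{item: RationalAbRokh}. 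The content is the converse.

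\emph{Reduction of \ref{item: RationalAbRokh}$\Rightarrow$\ref{item: ApproxDiv}.} Fix a separable, approximately divisible \cstar-algebra $C$ with the trivial $G$-action, and set $E=(M_p\oplus M_q)^{\otimes\infty}$, which is separable, unital and nuclear. Since the distinct primes $p,q$ are coprime, Lemma~\ref{lemma: approx-div} supplies a unital $*$-homomorphism $E\to F_\infty(C)$. As the action on $C$ is trivial, its image lies in the fixed-point algebra, and composing with the canonical inclusion $F_\infty(C)\subseteq F_\infty(A\otimes C)$ (Remark~\ref{rmk: AbRokhProp}) yields a unital equivariant $*$-homomorphism from $E$, carrying the trivial action, into $F_\infty(A\otimes C)$ with its induced action. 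By Lemma~\ref{lemma: central-ss} the first-factor embedding $\id_{A\otimes C}\otimes 1_E$ is equivariantly sequentially split. I then run the two-way factorisation of \eqref{eq: RokhDimSeqSplit}: both $(\id_{B\otimes C}\otimes 1_E)\circ(\varphi\otimes\id_C)$ and $(\varphi\otimes\id_{C\otimes E})\circ(\id_{A\otimes C}\otimes 1_E)$ equal the map $a\otimes c\mapsto \varphi(a)\otimes c\otimes 1_E$. Granting the crux below, $\varphi\otimes\id_{C\otimes E}=(\varphi\otimes\id_E)\otimes\id_C$ is equivariantly sequentially split by tensor permanence \cite[Theorem~2.8(IV)]{SeqSplit} (in its equivariant form), so the right-hand composite is equivariantly sequentially split; hence so is the left-hand composite, and since $\psi\circ(\varphi\otimes\id_C)$ being equivariantly sequentially split forces $\varphi\otimes\id_C$ to be, we conclude.

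\emph{The crux and the main obstacle.} It remains to prove that $\varphi\otimes\id_E$ is equivariantly sequentially split; since $E$ is $\mathcal{Z}$-stable it suffices to treat $\varphi\otimes\id_{\mathcal{Z}}$. This is where the two primes must genuinely interact, and I expect it to be the hardest step. The factorisation trick of the previous paragraph becomes circular here, and neither a unital inclusion $\mathcal{Z}\hookrightarrow M_{(pq)^\infty}$ nor a tensor cancellation suffices, because a $\mathcal{Z}$-stable algebra need not centrally absorb $M_{p^\infty}$. Instead I would first record that a finite direct sum of equivariantly sequentially split maps is again one (the equivariant continuous sequence algebra of a finite direct sum splits as the corresponding direct sum), which already gives that $\varphi\otimes\id_{M_{p^\infty}\oplus M_{q^\infty}}$ is equivariantly sequentially split; tensoring either hypothesis with the other UHF factor also gives $\varphi\otimes\id_{M_{(pq)^\infty}}$ equivariantly sequentially split. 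I would then assemble the splitting for $\varphi\otimes\id_{\mathcal{Z}}$ from the splittings witnessing the $M_{p^\infty}$-, $M_{q^\infty}$- and $M_{(pq)^\infty}$-cases, organised along a generalised dimension-drop presentation of $\mathcal{Z}$ whose two endpoint fibres are $M_{p^\infty}$ and $M_{q^\infty}$ and whose interior fibre is $M_{(pq)^\infty}$. The delicate point, and the main obstacle, is to glue these splittings continuously and equivariantly across the interval and to pass to the inductive limit defining $\mathcal{Z}$, which I would carry out by a reindexation argument in the relevant sequence algebra.
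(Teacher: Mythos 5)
Your direction \textup{(ii)}$\Rightarrow$\textup{(i)} and your reduction of \textup{(ii)} to the single claim that $\varphi\otimes\id_E$ is equivariantly sequentially split for $E=(M_p\oplus M_q)^{\otimes\infty}$ are both correct and coincide with the paper's argument. The problem is the crux, which you explicitly leave open: you reduce it further to $\varphi\otimes\id_{\mathcal Z}$ and propose to glue the splittings for $M_{p^\infty}$, $M_{q^\infty}$ and $M_{(pq)^\infty}$ along a generalised dimension-drop presentation of $\mathcal Z$. This is a genuine gap, not a routine verification: the three splittings arise from entirely unrelated choices of approximate lifts, so there is no reason they should interpolate continuously over the interval, and forcing them to do so is essentially as hard as the original problem. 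The detour through $\mathcal Z$ also discards exactly the structure you need --- the point of working with $E$ rather than $\mathcal Z$ is that $E$ is a limit of finite direct sums of matrix algebras whose sizes are mixed $p$- and $q$-powers, which is what allows the two hypotheses to be used separately.

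The paper's mechanism for the crux is elementary and avoids $\mathcal Z$ entirely. Fix $\varepsilon>0$, finite sets $\mathcal F\Subset A$, $\mathcal G\Subset B$ and a compact $K\subseteq G$, and extract from the two hypotheses approximately multiplicative, approximately equivariant $*$-linear maps $\psi_r\colon B\to A\otimes M_{r^\ell}$ with $\|\psi_r(\varphi(a))-a\otimes 1\|\le\varepsilon$ on $\mathcal F$, for $r\in\{p,q\}$ and a common $\ell$. By the binomial theorem, $(M_p\oplus M_q)^{\otimes 2\ell}\cong\bigoplus_{j=0}^{2\ell}(M_{p^j}\otimes M_{q^{2\ell-j}})^{\oplus\binom{2\ell}{j}}$, and every summand contains a full tensor factor $M_{p^\ell}$ (when $j\ge\ell$) or $M_{q^\ell}$ (when $j\le\ell$); inserting $\psi_p$ or $\psi_q$ into that factor and the unit elsewhere produces an approximate splitting $\psi\colon B\to A\otimes(M_p\oplus M_q)^{\otimes 2\ell}\subseteq A\otimes E$. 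Composing with a high power of the one-sided Bernoulli shift on $E$ makes the image approximately commute with any prescribed finite subset of $1\otimes E$, and a diagonal sequence then yields an equivariant $*$-homomorphism $\kappa\colon B\to (A\otimes E)_\infty\cap(1\otimes E)'$ with $\kappa\circ\varphi=\id_A\otimes 1_E$, which by nuclearity of $E$ induces the desired splitting of $\varphi\otimes\id_E$. Your own observation that $\varphi\otimes\id_{M_{p^\infty}\oplus M_{q^\infty}}$ is equivariantly sequentially split is the germ of this direct-sum idea; what is missing in your write-up is the passage to the infinite tensor power with central control, which is supplied by the binomial decomposition and the Bernoulli shift rather than by any dimension-drop gluing.
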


\begin{proof}
The implication \ref{item: ApproxDiv}$\Rightarrow$\ref{item: RationalAbRokh} is trivial since $M_{p^\infty}$ is approximately divisible for any prime number $p$.
We will show that \ref{item: RationalAbRokh} implies \ref{item: ApproxDiv}. 

We first claim that it suffices to show that $\varphi\otimes\id_E$ is equivariantly sequentially split, where $E=(M_p\oplus M_q)^{\otimes\infty}$.
Let $C$ be a separable, approximately divisible \cstar-algebra.
By Lemma~\ref{lemma: approx-div}, there exists a unital $*$-homomorphism from $E$ to $F_\infty(C)$, so the first-factor embedding $\id_C\otimes 1_E$ is sequentially split by Lemma~\ref{lemma: central-ss}.
As we equip both $C$ and $E$ with the trivial $G$-actions, we have that $\id_A\otimes\id_C\otimes 1_E$ is equivariantly sequentially split by \cite[Theorem 2.8(IV)]{SeqSplit}.
So if we assume that $\varphi\otimes\id_E$ is equivariantly sequentially split, so is $\varphi\otimes\id_C\otimes\id_E$ by \cite[Theorem 2.8(IV)]{SeqSplit}.
Hence, the composition
\[
(\varphi\otimes\id_C\otimes\id_E)\circ(\id_A\otimes\id_C\otimes 1_E)=(\id_B\otimes\id_C\otimes 1_E)\circ(\varphi\otimes\id_C)
\]
is equivariantly sequentially split and thus also $\varphi\otimes\id_C$.

It remains to show that \ref{item: RationalAbRokh} implies that $\varphi\otimes\id_E$ is equivariantly sequentially split.
Let $\varepsilon>0$, choose finite sets $\mathcal{F}\Subset A$ and $ \mathcal{G}\Subset B$ with $\varphi(\mathcal{F})\subseteq \mathcal{G}$, and let $K\subseteq G$ be a compact set.
Let $r\in\{p,q\}$.
Since $\varphi\otimes\id_{M_{r^\infty}}$ is equivariantly sequentially split, there exists an equivariant $*$-homomorphism from $(B,\beta)$ to $(A\otimes M_{r^\infty})_\infty$ that sends $\varphi(a)$ to $a\otimes 1$ for all $a\in A$.
If we represent such $*$-homomorphisms via sequences of $*$-linear maps\footnote{Note that one can first lift the $*$-linear map from $B$ to $(A\otimes M_{r^\infty})_\infty$ to a linear map $\psi_0\colon B\to \ell^\infty(A\otimes M_{r^\infty})$. 
We then get a $*$-linear lift by defining $\psi(b)=\frac{1}{2}(\psi_0(b)+\psi_0(b^*)^*)$.}, we may find two $*$-linear maps $\psi_p\colon B\to A\otimes M_{p^\infty}$ and $\psi_q\colon B\to A\otimes M_{q^\infty}$ such that for $r\in\{p,q\}$, one has
\begin{itemize}
	\item $\|\psi_r(b)\|\leq\|b\|+\varepsilon$ for all $b\in\mathcal G$;
    \item $\|\psi_r(b_1b_2)-\psi_r(b_1)\psi_r(b_2)\|\leq\varepsilon$ for all $b_1,b_2\in\mathcal{G}$;
    \item $\max\limits_{g\in K}\|(\alpha_g\otimes\id_{M_{r^\infty}})(\psi_r(b))-\psi_r(\beta_g(b))\|\leq\varepsilon$ for all $b\in\mathcal{G}$;
    \item $\|\psi_r(\varphi(a))-a\otimes 1_{M_{r^\infty}}\|\leq\varepsilon$ for all $a\in\mathcal{F}$.
\end{itemize}
Since $\mathcal{G}\subseteq B$ is finite, we can assume there exists $\ell\in\mathbb{N}$ such that $\psi_r(\mathcal{G})\subseteq A\otimes M_{r^\ell}$ for each $r\in\{p,q\}$.
By composing $\psi_r$ with a conditional expectation $A\otimes M_{r^\infty}\to A\otimes M_{r^\ell}$ at the expense of a small perturbation over this finite set, we can and will assume that $\psi_r(B)\subseteq A\otimes M_{r^\ell}$ for $r\in\{p,q\}$, while all the four conditions above are still satisfied.

By the binomial theorem for tensor powers of direct sums, we have that
\[
(M_p\oplus M_q)^{\otimes 2\ell}\cong \bigoplus\limits_{j=0}^{2\ell} (M_{p^j}\otimes M_{q^{2\ell-j}})^{\oplus\binom {2\ell}j}.
\]
We see that each of these direct summands decomposes as a tensor product where one of the factors is either $M_{p^\ell}$ or $M_{q^\ell}$.
Thus, modulo composing with appropriate maps that move the tensor copy of $A$ to the left-most side of the tensor product, we obtain a $*$-linear map $\psi\colon B\to A\otimes (M_p\oplus M_q)^{\otimes 2\ell}\subseteq A\otimes E$ via
\[
\psi=\bigoplus\limits_{j=0}^\ell (1_{p^j}\otimes \psi_q\otimes 1_{q^{\ell-j}})^{\oplus \binom {2\ell}{j}}\oplus\bigoplus\limits_{j=\ell+1}^{2\ell}(\psi_p\otimes 1_{p^{j-\ell}}\otimes 1_{q^{2\ell-j}})^{\oplus \binom {2\ell}{j}}.
\]
Based on the properties of $\psi_p$ and $\psi_q$, we can immediately observe that 
\begin{itemize}
	\item $\|\psi(b)\|\leq\|b\|+\varepsilon$ for all $b\in\mathcal G$;
    \item $\|\psi(b_1b_2)-\psi(b_1)\psi(b_2)\|\leq\varepsilon$ for all $b_1,b_2\in\mathcal{G}$;
    \item $\max\limits_{g\in K}\|(\alpha_g\otimes\id_{E})(\psi(b))-\psi(\beta_g(b))\|\leq\varepsilon$ for all $b\in\mathcal{G}$;
    \item $\|\psi(\varphi(a))-a\otimes 1_{E}\|\leq\varepsilon$ for all $a\in\mathcal{F}$.
\end{itemize}
Let $\mathcal E\Subset E$ be a finite subset and let $\sigma$ be the one-sided Bernoulli shift endomorphism on $E=(M_p\oplus M_q)^{\otimes\infty}$ given by the formula \[\sigma(x_1\otimes x_2\otimes\ldots)=1\otimes x_1\otimes x_2\otimes\ldots \ .\]
If we compose $\psi$ with $\id_A\otimes\sigma^N$ for $N$ large enough, then all of the above properties are preserved and we may further assume that
\[
\|[\psi(b),1_{\mathcal{M}(A)}\otimes x]\|\leq\varepsilon \quad\text{for all } b\in\mathcal{G} \text{ and } x\in\mathcal E.
\] 
Since the quintuple $(\varepsilon,\mathcal{F},\mathcal{G},K,\mathcal{E})$ was arbitrary, we can thus find an equivariant $*$-homomorphism 
\[
\kappa: (B,\beta)\to \big( (A\otimes E)_\infty\cap (1_{\mathcal{M}(A)}\otimes E)', (\alpha\otimes\id_E)_\infty \big) \quad\text{with}\quad \kappa\circ\varphi=\id_A\otimes 1_E.
\]
By the universal property of the tensor product (keep in mind that $E$ is nuclear), this induces an equivariant $*$-homomorphism 
\[
\theta: (B\otimes E,\beta\otimes\id_E)\to \big( (A\otimes E)_\infty, (\alpha\otimes\id_E)_\infty \big) \quad\text{with}\quad \theta\circ(\varphi\otimes\id_E)=\id_A\otimes\id_E.
\]
This finishes the proof.
\end{proof}

As a consequence of the lemma above, we can obtain an a priori weaker characterisation for the rational abelian Rokhlin property.

\begin{prop}\label{prop: RationalAbRokh}
Let $G$ be a second-countable, locally compact, abelian group, let $A$ be a separable \cstar-algebra and $\alpha\colon G\curvearrowright A$. Then the following statements are equivalent: 
\begin{enumerate}[label=\textup{(\roman*)}]
\item $\alpha$ has the rational abelian Rokhlin property;\label{item: AbRokh}
\item $\alpha$ is cocycle conjugate to $\alpha\otimes\id_{\mathcal Z}$ and $\alpha\otimes\id_C$ has the abelian Rokhlin property for every separable, approximately divisible \cstar-algebra $C$.\label{item: AbRokhApproxDiv}
\end{enumerate}
\end{prop}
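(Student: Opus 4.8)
The plan is to derive both implications from the two reformulations already in hand: Lemma~\ref{lemma: AbRokhSeqSplit}, which recasts the abelian Rokhlin property as equivariant sequential splitness of first-factor embeddings into $C(\mathbb{T})$-tensor products, and Lemma~\ref{lemma: RationalAbRokh}, which promotes sequential splitness after tensoring with two coprime UHF algebras to sequential splitness after tensoring with an arbitrary approximately divisible \cstar-algebra. Since equivariant $\mathcal{Z}$-stability appears verbatim in both \ref{item: AbRokh} and \ref{item: AbRokhApproxDiv}, I would simply carry that hypothesis along and concentrate on the remaining conditions. The implication \ref{item: AbRokhApproxDiv}$\Rightarrow$\ref{item: AbRokh} is then immediate, since the UHF algebras $M_{p^\infty}$ and $M_{q^\infty}$ are approximately divisible (as recorded in the proof of Lemma~\ref{lemma: RationalAbRokh}), so specialising $C$ to each of them in \ref{item: AbRokhApproxDiv} produces two distinct primes of the kind demanded by the definition of the rational abelian Rokhlin property.

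For the substantive direction \ref{item: AbRokh}$\Rightarrow$\ref{item: AbRokhApproxDiv}, I would fix a character $\chi\in\widehat{G}$ and run the argument for the equivariant first-factor embedding
\[
\varphi=\id_A\otimes 1\colon (A,\alpha)\to (A\otimes C(\mathbb{T}),\alpha\otimes\sigma^\chi).
\]
The first step is to notice that, up to the equivariant flip isomorphism exchanging the $C(\mathbb{T})$ and $M_{r^\infty}$ tensor legs, the map $\varphi\otimes\id_{M_{r^\infty}}$ coincides with the first-factor embedding
\[
\id_{A\otimes M_{r^\infty}}\otimes 1\colon (A\otimes M_{r^\infty},\alpha\otimes\id_{M_{r^\infty}})\to (A\otimes M_{r^\infty}\otimes C(\mathbb{T}),(\alpha\otimes\id_{M_{r^\infty}})\otimes\sigma^\chi)
\]
for $r\in\{p,q\}$. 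By Lemma~\ref{lemma: AbRokhSeqSplit}, the assumption that $\alpha\otimes\id_{M_{p^\infty}}$ and $\alpha\otimes\id_{M_{q^\infty}}$ have the abelian Rokhlin property says exactly that these embeddings are equivariantly sequentially split for every character; hence so are $\varphi\otimes\id_{M_{p^\infty}}$ and $\varphi\otimes\id_{M_{q^\infty}}$. Feeding $\varphi$ into Lemma~\ref{lemma: RationalAbRokh} now yields that $\varphi\otimes\id_C$ is equivariantly sequentially split for every separable, approximately divisible \cstar-algebra $C$. Undoing the tensor-factor identification once more, this is precisely the statement that the first-factor embedding of $(A\otimes C,\alpha\otimes\id_C)$ into its $C(\mathbb{T})$-tensor product is equivariantly sequentially split, so Lemma~\ref{lemma: AbRokhSeqSplit} gives that $\alpha\otimes\id_C$ has the abelian Rokhlin property. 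As $\chi$ was arbitrary and the $\mathcal{Z}$-stability hypothesis is retained, this is \ref{item: AbRokhApproxDiv}.

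The only genuinely delicate point is the bookkeeping of tensor factors in the two invocations of Lemma~\ref{lemma: AbRokhSeqSplit}: I would verify that the flip swapping the $C(\mathbb{T})$ leg past the $M_{r^\infty}$ (respectively $C$) leg intertwines $(\alpha\otimes\sigma^\chi)\otimes\id$ with $(\alpha\otimes\id)\otimes\sigma^\chi$ and carries $\varphi\otimes\id$ to the relevant first-factor embedding. Since equivariant sequential splitness is stable under pre- and post-composition with equivariant isomorphisms, this reindexing is harmless. I would also record that all tensor products occurring through $C(\mathbb{T})$ and $M_{r^\infty}$ are unambiguous by nuclearity, which is what legitimises applying Lemma~\ref{lemma: RationalAbRokh} to the single morphism $\varphi$ and matching the conclusion back to the abelian Rokhlin property of $\alpha\otimes\id_C$.
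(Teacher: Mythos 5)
Your proposal is correct and follows exactly the route the paper takes: combining Lemma~\ref{lemma: AbRokhSeqSplit} with Lemma~\ref{lemma: RationalAbRokh} applied to each first-factor embedding $\id_A\otimes 1\colon (A,\alpha)\to (A\otimes C(\mathbb{T}),\alpha\otimes\sigma^\chi)$. The tensor-factor bookkeeping you spell out is left implicit in the paper's one-line proof, but your treatment of it is accurate.
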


\begin{proof}
Using Lemma~\ref{lemma: AbRokhSeqSplit}, we can deduce the equivalence by applying Lemma~\ref{lemma: RationalAbRokh} to each of the equivariant maps $\id_A\otimes 1\colon (A,\alpha)\to (A\otimes C(\mathbb{T}),\alpha\otimes\sigma^\chi)$, where $\chi\in\widehat{G}$ is arbitrarily chosen.
\end{proof}

\begin{theorem}\label{thm: AbRokhEquivDef}
Let $A$ be a separable \cstar-algebra such that $A\cong A\otimes\mathcal{Z}$. If $\alpha\colon\mathbb{Z}\curvearrowright A$,
then the following statements are equivalent:
\begin{enumerate}[label=\textup{(\roman*)}]
\item $\dimrokc(\alpha)<\infty$;\label{item: RokhDim}
\item $\alpha$ is cocycle conjugate to $\alpha\otimes\gamma$ for some strongly outer action $\gamma\colon \mathbb{Z}\curvearrowright\mathcal{Z}$;\label{item: Absorb}
\item $\alpha$ is cocycle conjugate to $\alpha\otimes\gamma$ for every automorphism $\gamma$ of $\mathcal{Z}$;\label{item: Absorb-all}
\item $\alpha$ has the rational abelian Rokhlin property.\label{item: AbRokh1}
\end{enumerate}
\end{theorem}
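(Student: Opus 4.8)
My plan is to prove the cyclic chain of implications \textup{(iv)}$\Rightarrow$\textup{(i)}$\Rightarrow$\textup{(ii)}$\Rightarrow$\textup{(iii)}$\Rightarrow$\textup{(iv)}. Only the first arrow is internal to the machinery developed above; the remaining three are where uniqueness and absorption input about strongly outer $\mathbb{Z}$-actions on $\mathcal{Z}$ enters. Throughout I fix a strongly outer automorphism $\gamma_0\colon\mathbb{Z}\curvearrowright\mathcal{Z}$ (these exist) and use freely that $\dimrokc$ is a cocycle conjugacy invariant and that equivariant sequential splitness is preserved under cocycle conjugacy of the domain and codomain.

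For \textup{(iv)}$\Rightarrow$\textup{(i)} I first record a sequentially-split reformulation of finite Rokhlin dimension. For fixed $n$ let $\mathcal{O}_n$ be the unital (nuclear) universal \cstar-algebra generated by two pairwise commuting c.p.c.\ order zero maps $C(\mathbb{Z}/n\mathbb{Z})\to\mathcal{O}_n$ whose units sum to $1$, equipped with the $\mathbb{Z}$-shift; by Definition~\ref{defn: RokhDim} together with its universal property and Lemma~\ref{lemma: central-ss}, the condition $\dimrokc(\beta,n\mathbb{Z})\le 1$ is equivalent to the first-factor embedding $\id\otimes 1_{\mathcal{O}_n}$ being $\mathbb{Z}$-equivariantly sequentially split. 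Now assume \textup{(iv)} and choose distinct primes $p,q$ as in Definition~\ref{defn: AbelianRokhlin}. By Proposition~\ref{prop: RokhDim}, $\dimrokc(\alpha\otimes\id_{M_{r^\infty}})\le 1$ for $r\in\{p,q\}$, so writing $\varphi_n=\id_A\otimes 1_{\mathcal{O}_n}\colon(A,\alpha)\to(A\otimes\mathcal{O}_n,\alpha\otimes\mathrm{shift})$, both $\varphi_n\otimes\id_{M_{p^\infty}}$ and $\varphi_n\otimes\id_{M_{q^\infty}}$ are equivariantly sequentially split. Lemma~\ref{lemma: RationalAbRokh} then upgrades this to equivariant sequential splitness of $\varphi_n\otimes\id_{\mathcal{Z}}$. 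Since \textup{(iv)} includes equivariant $\mathcal{Z}$-stability, $\alpha$ is cocycle conjugate to $\alpha\otimes\id_{\mathcal{Z}}$; tensoring this cocycle conjugacy by $(\mathcal{O}_n,\mathrm{shift})$ identifies the domain and codomain of $\varphi_n\otimes\id_{\mathcal{Z}}$ equivariantly and compatibly with those of $\varphi_n$, so $\varphi_n$ is itself equivariantly sequentially split. By the reformulation this yields $\dimrokc(\alpha,n\mathbb{Z})\le 1$ for every $n$, hence $\dimrokc(\alpha)\le 1<\infty$.

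The absorption arc is handled via known classification results. For \textup{(i)}$\Rightarrow$\textup{(ii)} I would use that finite Rokhlin dimension with commuting towers together with $A\cong A\otimes\mathcal{Z}$ forces $\alpha$ to be equivariantly $\mathcal{Z}$-stable and, more substantially, to absorb the strongly outer model, giving $\alpha$ cocycle conjugate to $\alpha\otimes\gamma_0$. For \textup{(ii)}$\Rightarrow$\textup{(iii)}, given this cocycle conjugacy and an arbitrary $\gamma\in\Aut(\mathcal{Z})$, I tensor by $\gamma$ to obtain that $\alpha\otimes\gamma$ is cocycle conjugate to $\alpha\otimes\gamma_0\otimes\gamma$; since $\gamma_0\otimes\gamma$ is again strongly outer on $\mathcal{Z}\otimes\mathcal{Z}\cong\mathcal{Z}$, uniqueness of strongly outer $\mathbb{Z}$-actions on $\mathcal{Z}$ gives $\gamma_0\otimes\gamma$ cocycle conjugate to $\gamma_0$, whence $\alpha\otimes\gamma$ is cocycle conjugate to $\alpha\otimes\gamma_0$ and hence to $\alpha$. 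For \textup{(iii)}$\Rightarrow$\textup{(iv)}, taking $\gamma=\id_{\mathcal{Z}}$ yields equivariant $\mathcal{Z}$-stability, while taking $\gamma=\gamma_0$ gives that $\alpha\otimes\id_{M_{p^\infty}}$ is cocycle conjugate to $\alpha\otimes\gamma_0\otimes\id_{M_{p^\infty}}$; the latter's $\mathcal{Z}\otimes M_{p^\infty}$ factor carries a strongly outer $\mathbb{Z}$-action, which by uniqueness on $\mathcal{Z}\otimes M_{p^\infty}$ is cocycle conjugate to $\id_{\mathcal{Z}}\otimes\mu_p$ for a Kishimoto-type strongly outer automorphism $\mu_p$ of $M_{p^\infty}$ with the Rokhlin property, hence with the abelian Rokhlin property. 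By Remark~\ref{rmk: AbRokhProp} this passes to the tensor product and then, by cocycle conjugacy, to $\alpha\otimes\id_{M_{p^\infty}}$; repeating with $q$ gives the rational abelian Rokhlin property.

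The main obstacle is the implication \textup{(i)}$\Rightarrow$\textup{(ii)}: extracting a genuine cocycle conjugacy $\alpha\sim\alpha\otimes\gamma_0$ from the merely approximate order zero tower data encoded by $\dimrokc(\alpha)<\infty$. This is exactly the point at which one must combine finite Rokhlin dimension with $\mathcal{Z}$-stability to produce an equivariant unital embedding of the strongly outer $\mathcal{Z}$-system into $(F_\infty(A),\widetilde{\alpha}_\infty)$ and then invoke an equivariant (McDuff-type) absorption theorem; the substantive classification machinery is unavoidable here, in contrast to the comparatively formal bookkeeping of the other arrows. The remaining technical points to verify are the nuclearity and universal property of $\mathcal{O}_n$, so that Lemmas~\ref{lemma: central-ss} and~\ref{lemma: RationalAbRokh} genuinely apply, and the permanence of strong outerness under tensoring, both of which are routine.
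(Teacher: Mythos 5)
Your arrows (i)$\Rightarrow$(ii), (ii)$\Rightarrow$(iii) and (iii)$\Rightarrow$(iv) are in substance the paper's argument: the first is exactly the equivariant absorption theorem you allude to (\cite[Corollary~B]{GSRokhDimAbs}), the second amounts to the uniqueness and absorption results for strongly outer $\mathbb{Z}$-actions on $\mathcal{Z}$ (\cite[Corollary~3.5]{GSSSA19}), and the third reduces, as in the paper, to the Rokhlin property of strongly outer automorphisms of UHF algebras via \cite{Kishimoto95} and \cite{BratteliEvansKishimoto95}. Those steps are fine once the citations are supplied.

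The genuine gap is in (iv)$\Rightarrow$(i), at the step where you invoke Lemma~\ref{lemma: RationalAbRokh} to upgrade equivariant sequential splitness of $\varphi_n\otimes\id_{M_{p^\infty}}$ and $\varphi_n\otimes\id_{M_{q^\infty}}$ to that of $\varphi_n\otimes\id_{\mathcal{Z}}$. The lemma only delivers the conclusion for separable \emph{approximately divisible} $C$, and $\mathcal{Z}$ is not approximately divisible in the sense of the paper: a unital $*$-homomorphism $M_2\oplus M_3\to F_\infty(\mathcal{Z})\subseteq\mathcal{Z}_\infty$ would produce projections $e,e'$ with two copies of $e$ and three copies of $e'$ summing to $1$, but every projection in $\mathcal{Z}_\infty$ lifts to a sequence of projections in the projectionless algebra $\mathcal{Z}$ and hence has coordinatewise trace in $\{0,1\}$, and $2s+3t=1$ has no solution with $s,t\in\{0,1\}$. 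So the lemma does not apply with $C=\mathcal{Z}$, and your subsequent cancellation of the $\mathcal{Z}$-factor via equivariant $\mathcal{Z}$-stability never gets started. This is exactly the hard ``de-rationalization'' point of the theorem, and it cannot be carried out by the formal sequential-splitness bookkeeping alone. The paper avoids it by proving (iv)$\Rightarrow$(ii) directly: Proposition~\ref{prop: RokhDim} gives $\dimrokc(\alpha\otimes\id_{M_{r^\infty}})\leq 1$ for $r\in\{p,q\}$, then \cite[Corollary~B]{GSRokhDimAbs} gives that $\alpha\otimes\id_{M_{r^\infty}}$ absorbs a strongly outer, strongly self-absorbing $\gamma\colon\mathbb{Z}\curvearrowright\mathcal{Z}$, and finally \cite[Theorem~6.6(ii)]{Szabo17ssa3} --- a genuinely nontrivial ``absorption after tensoring with two coprime UHF algebras implies absorption after tensoring with $\mathcal{Z}$'' theorem --- yields that $\alpha\otimes\id_{\mathcal{Z}}$, hence $\alpha$, absorbs $\gamma$. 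You need this (or an equivalent) external input; without it your cycle is broken at (iv)$\Rightarrow$(i).
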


\begin{proof}
The equivalence \ref{item: Absorb}$\Leftrightarrow$\ref{item: Absorb-all} is a direct consequence of \cite[Corollary 3.5]{GSSSA19}.

For the rest of the proof, let us fix a strongly outer action $\gamma\colon \mathbb{Z}\curvearrowright\mathcal{Z}$.
Note that such an action $\gamma$ has finite Rokhlin dimension with commuting towers by \cite[Theorem 2.14]{GSSSA19}.
Furthermore such an action $\gamma$ is automatically strongly self-absorbing by \cite[Theorem 3.2]{GSSSA19}.\footnote{We note that in these earlier articles, the third-named author used the outdated terminology of ``semi-strongly self-absorbing'' actions; see \cite[Definition 1.4]{GSSSA19} for the original definition and see \cite[Section 5]{Szabo21} for why the terminology has been revised.}
Hence, we get the implication \ref{item: Absorb}$\Rightarrow$\ref{item: RokhDim} as $\dimrokc(\alpha\otimes\gamma)\leq\dimrokc(\gamma)$\footnote{Similarly to Remark \ref{rmk: AbRokhProp}, this follows using the canonical inclusion $F_\infty(\mathcal{Z})\subset F_\infty(A\otimes\mathcal{Z})$.}, and the implication \ref{item: RokhDim}$\Rightarrow$\ref{item: Absorb} is a consequence of \cite[Corollary B]{GSRokhDimAbs}.

We shall argue the implication \ref{item: AbRokh1}$\Rightarrow$\ref{item: Absorb}.
Let $p$ and $q$ be two distinct prime numbers for which the automorphisms $\alpha\otimes\id_{M_{p^\infty}}$ and $\alpha\otimes\id_{M_{q^\infty}}$ have the abelian Rokhlin property.
Proposition~\ref{prop: RokhDim} implies that these automorphisms have finite Rokhlin dimension with commuting towers.
Applying \cite[Corollary B]{GSRokhDimAbs} again, it follows for $r\in\{p, q\}$ that $\alpha\otimes\id_{M_{r^\infty}}$ is cocycle conjugate to $\alpha\otimes\id_{M_{r^\infty}}\otimes\gamma$.
By \cite[Theorem 6.6(ii)]{Szabo17ssa3}, we get that $\alpha\otimes\id_{\mathcal Z}$ is cocycle conjugate to $\alpha\otimes\id_{\mathcal Z}\otimes\gamma$.
Since we assumed that $\alpha$ was cocycle conjugate to $\alpha\otimes\id_{\mathcal Z}$, this shows that $\alpha$ is cocycle conjugate to $\alpha\otimes\gamma$.
 
Lastly, let us argue the implication \ref{item: Absorb-all}$\Rightarrow$\ref{item: AbRokh1}.
Clearly we get that $\alpha$ is cocycle conjugate to $\alpha\otimes\id_{\mathcal Z}$.
As above, we still keep the choice of $\gamma$.
Let $p$ and $q$ be two distinct prime numbers.
For $r\in\{p,q\}$, we then have that $\alpha\otimes\id_{M_{r^\infty}}$ is cocycle conjugate to $\alpha\otimes\id_{M_{r^\infty}}\otimes\gamma$.
Using Remark \ref{rmk: AbRokhProp}, it suffices to show that the automorphism $\gamma\otimes\id_{M_{r^\infty}}$ has the abelian Rokhlin property.
Since $M_{r^\infty}\otimes\mathcal{Z}\cong M_{r^\infty}$, this automorphism is conjugate to some strongly outer action $\delta\colon \mathbb{Z}\curvearrowright M_{r^\infty}$.
The fact that such an action has the abelian Rokhlin property is a direct consequence of \cite[Theorem 1.3]{Kishimoto95} and \cite[Theorem 1.6]{BratteliEvansKishimoto95}.
Namely, the latter reference allows us to find, for any $\lambda\in\mathbb{T}$, a unitary $u\in (M_{r^\infty})_\infty\cap (M_{r^\infty})'$ such that $\bar{\lambda}=u \delta_\infty(u)^*$, or equivalently $\delta_\infty(u)=\lambda u$.
This finishes the proof.
\end{proof}


\section{Dual properties of group actions}

In this section, we will prove that the abelian Rokhlin property is dual to a property which we will call being pointwise strongly approximately inner.
From this we will deduce the corresponding duality result for actions with the rational abelian Rokhlin property (Definition \ref{defn: AbelianRokhlin}).

\begin{defn}\label{defn: ApproxInner}
Let $A$ be a separable \cstar-algebra and $G$ be a second-countable, locally compact, abelian group.
Let $\alpha\colon G\curvearrowright A$ be a continuous action.
We say that $\alpha$ is pointwise strongly approximately inner if for every $g\in G$, there exists a sequence of contractions $v_n$ in $A$ such that 
\begin{itemize}
    \item $v_nv_n^*$ and $v_n^*v_n$ converge strictly to $1_{\mathcal{M}(A)}$;
    \item $\alpha_g(a)=\lim\limits_{n\to\infty}v_nav_n^*$ for all $a\in A$;
    \item $\lim\limits_{n\to\infty}\max\limits_{h\in K}\|\alpha_h(v_n)-v_n\|=0$ for every compact subset $K\subseteq G$.
\end{itemize}
In the spirit of Definition \ref{defn: AbelianRokhlin}, we say that $\alpha$ is rationally pointwise strongly approximately inner if $\alpha$ is cocycle conjugate to $\alpha\otimes\id_{\mathcal{Z}}$ and there exist two distinct prime numbers $p$ and $q$ such that $\alpha\otimes\id_{M_{p^\infty}}$ and $\alpha\otimes\id_{M_{q^\infty}}$ are pointwise strongly approximately inner.
\end{defn}

\begin{rmk}\label{rmk: IzumiApproxInner}
In the case when $A$ is unital and the group $G$ is finite abelian, the definition above coincides with Izumi's notion of a strongly approximately inner action, which is a weakening of approximate representability (see \cite[Definition 3.6(1)]{Izu04}).
In particular, unlike in the case of approximate representability, we do not require the given unitaries to form a representation of the group.
\end{rmk}

To prove that pointwise strong approximate innerness and the abelian Rokhlin property are dual to each other, we shall express pointwise strong approximate innerness in the framework of equivariantly sequentially split $*$-homomorphisms, similar to how we did for the abelian Rokhlin property in the previous section.
We invite the reader to recall Definition \ref{defn: SeqSplit} as well as the notational convention from Subsection~\ref{subsect: Actions}.

\begin{lemma}\label{lemma: EquivSeqSplit}
Let $A$ be a separable \cstar-algebra and $G$ a second-countable, locally compact, abelian group.
Let $\alpha\colon G\curvearrowright A$ be a continuous action.
The action $\alpha$ is pointwise strongly approximately inner if and only if for every $g\in G$, the canonical equivariant inclusion
\[
\iota_g\colon (A,\alpha)\hookrightarrow (A\rtimes_{\alpha_g}\mathbb{Z}, \alpha^g)
\]
is $G$-equivariantly sequentially split.
\end{lemma}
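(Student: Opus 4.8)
The plan is to set up, for each fixed $g\in G$, a dictionary between equivariant sequential splittings of $\iota_g$ and single $\alpha_\infty$-fixed ``quasi-unitaries'' in $A_{\infty,\alpha}$ that implement $\alpha_g$ on $A$. By the universal property of the crossed product, an equivariant $*$-homomorphism $\psi\colon(A\rtimes_{\alpha_g}\mathbb{Z},\alpha^g)\to(A_{\infty,\alpha},\alpha_\infty)$ with $\psi\circ\iota_g$ the diagonal inclusion is tantamount to a choice of image $v=\psi(\lambda_g)$; conversely the sequence $(v_n)$ from Definition~\ref{defn: ApproxInner} is precisely a representing sequence of such a $v$. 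So everything comes down to translating between an element of the sequence algebra and a representing sequence, for which I would repeatedly use the identity $\max_{h\in K}\|\alpha_{\infty,h}(x)-x\|=\limsup_{n}\max_{h\in K}\|\alpha_h(x_n)-x_n\|$ recorded in the preliminaries, together with $\|xy\|=\limsup_n\|x_ny_n\|$.

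For the forward implication, fix $g$ and let $(v_n)$ be contractions as in Definition~\ref{defn: ApproxInner}, and put $v=[(v_n)_n]$. The third defining condition says exactly that $v\in A_{\infty,\alpha}$ and $\alpha_{\infty,h}(v)=v$ for all $h$; the first says that $vv^*$ and $v^*v$ act as a two-sided unit on $A\subseteq A_\infty$; and the second gives $vav^*=\alpha_g(a)$ for $a\in A$. From these I would derive the commutation rule $v^mb=\alpha_g^m(b)v^m$ for all $m\in\mathbb{Z}$ and $b\in A$ (with the convention $v^{-1}=v^*$), and then the key identity $cv^mv^n=cv^{m+n}$ for every $c\in A$. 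Although $v$ is a genuine unitary only modulo the annihilator $A_\infty\cap A^\perp$, this defect is invisible in such products precisely because the coefficient $c$ lies in $A$ and $vv^*,v^*v$ are honest units there. Consequently $a\lambda_g^n\mapsto av^n$ is a well-defined $*$-homomorphism on the algebraic crossed product; being contractive for the $\ell^1$-norm it extends to $\psi\colon A\rtimes_{\alpha_g}\mathbb{Z}\to A_{\infty,\alpha}$. Finally $\psi\circ\iota_g$ is the diagonal inclusion by construction, and equivariance follows from $\alpha_{\infty,h}(av^n)=\alpha_h(a)v^n=\psi(\alpha^g_h(a\lambda_g^n))$, using that $v$ is fixed.

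For the converse, fix $g$ and let $\psi$ be an equivariant splitting of $\iota_g$. I would pick an approximately $G$-invariant approximate unit $(e_\mu)$ of $A$ (available by \cite[Lemma~1.4]{KA88}, as in Remark~\ref{rmk: SigmaIdeal}) and set $w_\mu=\psi(e_\mu\lambda_g)\in A_{\infty,\alpha}$, a contraction. Using that $\psi$ is multiplicative and restricts to the diagonal on $A$, one computes $w_\mu w_\mu^*=e_\mu^2$, $w_\mu^*w_\mu=\alpha_g^{-1}(e_\mu^2)$ (both tending strictly to $1_{\mathcal{M}(A)}$), $w_\mu a w_\mu^*=e_\mu\alpha_g(a)e_\mu\to\alpha_g(a)$, and, by equivariance, $\max_{h\in K}\|\alpha_{\infty,h}(w_\mu)-w_\mu\|\le\max_{h\in K}\|\alpha_h(e_\mu)-e_\mu\|\to0$. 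Thus the $w_\mu$ satisfy approximate forms of the three conditions. A diagonal/reindexing argument then finishes: fixing a dense sequence in $A$, compacta $K_k\nearrow G$ and $\varepsilon_k\downarrow 0$, I would first choose $\mu_k$ making the above approximations hold within $\varepsilon_k$ on the first $k$ test elements and over $K_k$, then represent $w_{\mu_k}$ by a contractive sequence in $\ell^\infty_\alpha(\mathbb{N},A)$ and use the $\limsup$ identities to select $n_k$ with $v_k:=w_{\mu_k,n_k}\in A$ satisfying all three conditions within $2\varepsilon_k$. The resulting $(v_k)_k$ witnesses pointwise strong approximate innerness of $\alpha_g$, and $g$ was arbitrary.

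I expect the only real obstacle to be the non-unitality of $A$: since $\lambda_g$ is merely a multiplier, the prospective image $v$ is a genuine unitary only in $\mathcal{M}(D_{\infty,A})$, so the crux of the forward direction is verifying that this quasi-unitary still defines an honest $*$-homomorphism into $A_{\infty,\alpha}$ (and not merely into the quotient $\mathcal{M}(D_{\infty,A})$), which is exactly what the absorption identity $cv^mv^n=cv^{m+n}$ provides. The same non-unitality is what makes it convenient to work with $e_\mu\lambda_g\in A\rtimes_{\alpha_g}\mathbb{Z}$ rather than $\lambda_g$ itself in the converse.
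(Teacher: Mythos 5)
Your argument is correct and follows the same overall strategy as the paper's: in both directions one translates between the sequence $(v_n)$ of Definition~\ref{defn: ApproxInner} and a single $\alpha_\infty$-invariant contraction in the sequence algebra implementing $\alpha_g$. The forward direction is essentially identical --- the paper also defines the splitting by $a\lambda_g^n\mapsto a\widetilde{v}^n$, except that it first pushes $v$ into $\mathcal{M}(D_{\infty,A})$ via the short exact sequence \eqref{eq: SESNorm}, so that $\widetilde{v}$ becomes an honest fixed unitary and the covariant-pair argument applies verbatim, whereas you stay in $A_\infty$ and verify multiplicativity by hand through the absorption identity $cv^mv^n=cv^{m+n}$ for $c\in A$; both devices resolve the non-unitality issue you correctly identify as the crux. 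In the converse your route is mildly more elementary: the paper extends $\varphi$ to $\widetilde{\varphi}\colon\mathcal{M}(A\rtimes_{\alpha_g}\mathbb{Z})\to\mathcal{M}(D_{\infty,A})$, sets $\widetilde{v}=\widetilde{\varphi}(\lambda_g)$, and invokes the equivariant lifting of Remark~\ref{rmk: SigmaIdeal} to produce an invariant contraction in $\mathcal{N}(D_{\infty,A},A_\infty)$, while you bypass the multiplier algebra entirely by evaluating the splitting on $e_\mu\lambda_g$ for an approximately invariant approximate unit and extracting $(v_k)$ by a diagonal argument. Since the proof of Remark~\ref{rmk: SigmaIdeal} is itself exactly such an approximate-unit-plus-reindexation argument, the two converses are morally the same computation; yours is self-contained, while the paper's factors the work through a lemma it needs elsewhere anyway.
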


\begin{proof}
Suppose that $\alpha$ is pointwise strongly approximately inner and fix $g\in G$.
Then there exists  a sequence of contractions $v_n$ in $A$ such that $v_nv_n^*$ and $v_n^*v_n$ converge strictly to $1_{\mathcal{M}(A)}$, $\alpha_g(a)=\lim\limits_{n\to\infty}v_nav_n^*$ for any $a\in A$, and $\lim\limits_{n\to\infty}\max\limits_{h\in K}\|\alpha_h(v_n)-v_n\|=0$ for every compact subset $K\subseteq G$.
Then let $v=[(v_n)_{n\in\mathbb{N}}]\in (A_\infty)^{\alpha_\infty}$ be the induced element in the sequence algebra, which is clearly fixed by the induced action $\alpha_\infty$.
Then $v^*v$ and $vv^*$ act like units upon multiplying with elements in $A$.
Since it follows for all $a\in A$ that 
\[
va=vav^*v=\alpha_g(a)v \quad\text{and}\quad av=\alpha_g(\alpha_g^{-1}(a))v=v\alpha_g^{-1}(a)v^*v=v\alpha_g^{-1}(a),
\]
it follows that $v\in\mathcal{N}(D_{\infty,A},A_\infty)$.
We get that $v$ induces a unitary $\widetilde{v}\in\mathcal{U}(\mathcal{M}(D_{\infty,A}))$ by the canonical surjection in \eqref{eq: SESNorm}, which is clearly fixed under the induced action $\widetilde{\alpha}_\infty$.
If we denote by $\lambda_g$ the canonical unitary in $\mathcal{M}(A\rtimes_{\alpha_g}\mathbb{Z})$, then we may define an equivariant $*$-homomorphism 
\[
\varphi\colon (A\rtimes_{\alpha_g}\mathbb{Z},\alpha^g)\to (A_{\infty,\alpha},\alpha_\infty) \quad\text{by}\quad \varphi(a\lambda_g^n)=a\widetilde{v}^n\quad\text{for } a\in A,\ n\in\mathbb{Z}.
\]
By construction, the composition $\varphi\circ\iota_g\colon A\to A_\infty$ is the canonical diagonal embedding, so $\iota_g$ is $G$-equivariantly sequentially split.

Conversely, fix $g\in G$ and suppose that $\iota_g$ is $G$-equivariantly sequentially split.
Let $\varphi\colon (A\rtimes_{\alpha_g}\mathbb{Z},\alpha^g)\to (A_{\infty,\alpha},\alpha_\infty)$ be an equivariant $*$-homomorphism such that $\varphi\circ\iota_g$ is the canonical diagonal inclusion.
The diagonal inclusion $A\hookrightarrow A_\infty$ maps into $D_{\infty,A}$ and is in that sense nondegenerate.
Hence, $\varphi$ maps into $D_{\infty,A}$ and there exists a unital $*$-homomorphism 
\[
\widetilde{\varphi}\colon\mathcal{M}(A\rtimes_{\alpha_g}\mathbb{Z})\to\mathcal{M}(D_{\infty,A})
\] 
extending $\varphi$.
By naturality, this map is equivariant with respect to the actions $\alpha^g$ and $\widetilde{\alpha}_\infty$.
Set $\widetilde{v}=\widetilde{\varphi}(\lambda_g)$, which is a unitary in $\mathcal{M}(D_{\infty,A})$ fixed by $\widetilde{\alpha}_\infty$.
By Remark~\ref{rmk: SigmaIdeal}, there exists a contraction $v=[(v_n)_{n\in\mathbb{N}}]\in \mathcal{N}(D_{\infty,A},A_\infty)^{\alpha_\infty}$ lifting $\widetilde{v}$ under the map in \eqref{eq: SESNorm}.
The fact that $\widetilde{v}$ is a unitary precisely yields that both sequences $v_nv_n^*$ and $v_n^*v_n$ converge strictly to $1_{\mathcal{M}(A)}$.
Furthermore, $\alpha_g(a)=\varphi(\alpha_g(a))$ and $\alpha_g(a)=\lambda_ga\lambda_g^*$, so 
\[
\alpha_g(a)=\varphi(\lambda_ga\lambda_g^*)=vav^* =\lim_{n\to\infty} v_n a v_n^*
\]
for any $a\in A$.
Finally, as remarked in Subsection \ref{subsect: SeqAlg}, the fact that $v$ is fixed under $\alpha_\infty$ translates to $\lim\limits_{n\to\infty}\max\limits_{h\in K}\|\alpha_h(v_n)-v_n\|=0$ for every compact subset $K\subseteq G$.
As $g\in G$ was arbitrary, this shows that $\alpha$ is pointwise strongly approximately inner.
\end{proof}

\begin{lemma}\label{lemma: CommDiagrams}
Let $A$ be a separable \cstar-algebra and $G$ be a second-countable, locally compact, abelian group. Let $\alpha\colon G\curvearrowright A$ be a continuous action.
\begin{enumerate}[label=\textup{(\roman*)}]
    \item For any $g\in G$, let $\sigma^g\colon\widehat{G}\curvearrowright C(\mathbb{T})$ be defined by $\sigma^g_\chi(f)(z)=f(\chi(g)z)$ for any $\chi\in\widehat{G}$, $f\in C(\mathbb{T})$, and $z\in\mathbb{T}$. Then there exists an equivariant isomorphism
    \[
    \eta_g\colon((A\rtimes_\alpha G)\otimes C(\mathbb{T}),\widehat{\alpha}\otimes\sigma^g)\to ((A\rtimes_{\alpha_g}\mathbb{Z})\rtimes_{\alpha^g}G,\widehat{\alpha^g})
    \]
    such that the diagram
\[
\xymatrix{
(A\rtimes_\alpha G,\widehat{\alpha})\ar[dr]_{\id_{A\rtimes_\alpha G}\otimes 1~~~} \ar[rr]^{\iota_g\rtimes G} &&((A\rtimes_{\alpha_g}\mathbb{Z})\rtimes_{\alpha^g}G,\widehat{\alpha^g}) \\&((A\rtimes_\alpha G)\otimes C(\mathbb{T}),\widehat{\alpha}\otimes\sigma^g)\ar[ru]_{\eta_g}&
}
\]
commutes.\label{item: CommDiag1}
    \item For any $\chi\in \widehat{G}$, there exists an equivariant isomorphism
    \[
    \zeta_\chi\colon ((A\rtimes_\alpha G)\rtimes_{\widehat{\alpha}_\chi}\mathbb{Z},\widehat{\alpha}^\chi)\to ((A\otimes C(\mathbb{T}))\rtimes_{\alpha\otimes\sigma^\chi} G,\widehat{\alpha\otimes\sigma^\chi})
    \]
such that the diagram
    \[
\xymatrix{
(A\rtimes_\alpha G,\widehat{\alpha})\ar[dr] \ar[rr]^{(\id_A\otimes 1)\rtimes G} &&((A\otimes C(\mathbb{T}))\rtimes_{\alpha\otimes\sigma^\chi}G,\widehat{\alpha\otimes\sigma^\chi}) \\&((A\rtimes_\alpha G)\rtimes_{\widehat{\alpha}_\chi}\mathbb{Z},\widehat{\alpha}^\chi)\ar[ru]_{\zeta_\chi}&
}
\]
commutes, where the downward map is the canonical inclusion.\label{item: CommDiag2}
\end{enumerate}
\end{lemma}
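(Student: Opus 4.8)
The plan is to realise both isomorphisms through the universal properties of the (iterated) crossed products involved, i.e.\ by checking that the two \cstar-algebras in each item are built from the same generators and relations. Since $G$ is abelian and hence amenable, full and reduced crossed products coincide, so the universal property is unambiguous; and since $C(\mathbb{T})$ is nuclear, there is no ambiguity in the tensor product occurring in \ref{item: CommDiag1}. Thus it is legitimate to define the isomorphisms on the canonical generators and extend by universality, provided one also exhibits the inverse assignment.

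For \ref{item: CommDiag1} I would start from the observation that inside $(A\rtimes_{\alpha_g}\mathbb{Z})\rtimes_{\alpha^g}G$ there are two unitaries implementing $\alpha_g$ on the copy of $A$: the generator $\lambda_g$ coming from $A\rtimes_{\alpha_g}\mathbb{Z}$ and the canonical unitary $\lambda^{\alpha^g}_g$ at the group element $g\in G$. Hence $w:=\lambda_g(\lambda^{\alpha^g}_g)^*$ is a unitary commuting with $A$, and, because $\lambda_g$ is $\alpha^g$-fixed and $G$ is abelian, it commutes with every $\lambda^{\alpha^g}_h$, so $w$ commutes with the canonical copy of $A\rtimes_\alpha G$. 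A clean way to see that $w$ generates a \emph{free} commuting copy of $C(\mathbb{T})$ and thereby to produce $\eta_g$ is to identify $(A\rtimes_{\alpha_g}\mathbb{Z})\rtimes_{\alpha^g}G$ with $A\rtimes_\theta(\mathbb{Z}\times G)$ for $\theta_{(n,h)}=\alpha_{ng+h}$, and then apply the group automorphism $\Phi(n,h)=(n,ng+h)$: since $\theta\circ\Phi^{-1}$ is trivial on the first factor and equals $\alpha$ on the second, conjugating the action by $\Phi$ yields $(A\rtimes_\alpha G)\rtimes_{\mathrm{triv}}\mathbb{Z}=(A\rtimes_\alpha G)\otimes C(\mathbb{T})$, with the free unitary being the image of $(1,-g)$, i.e.\ precisely $w$. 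Commutativity of the triangle then amounts to noting that $\eta_g$ restricts to the canonical inclusion $A\rtimes_\alpha G\hookrightarrow(A\rtimes_{\alpha_g}\mathbb{Z})\rtimes_{\alpha^g}G$ (sending $\lambda^\alpha_h\mapsto\lambda^{\alpha^g}_h$), which is also the value of $\iota_g\rtimes G$; and equivariance reduces to the single computation $\widehat{\alpha^g}_\chi(w)=\chi(g)w=\eta_g(\sigma^g_\chi(z))$, using that the dual action scales $\lambda^{\alpha^g}_g$ while fixing $\lambda_g$.

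For \ref{item: CommDiag2} I would argue directly that both sides classify the same covariant data over $A$. A covariant representation of $\big((A\rtimes_\alpha G)\rtimes_{\widehat{\alpha}_\chi}\mathbb{Z},\widehat{\alpha}^\chi\big)$ unwinds into a representation $\rho$ of $A$, a unitary representation $\{V_h\}_{h\in G}$ with $V_h\rho(a)V_h^*=\rho(\alpha_h(a))$, and a further unitary $U$ (the image of the $\mathbb{Z}$-generator $\mu$) commuting with $\rho(A)$ and satisfying $UV_hU^*=\overline{\chi(h)}V_h$, the last relation because $\widehat{\alpha}_\chi(\lambda^\alpha_h)=\overline{\chi(h)}\lambda^\alpha_h$. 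On the other side, a covariant representation of $\big((A\otimes C(\mathbb{T}))\rtimes_{\alpha\otimes\sigma^\chi}G,\widehat{\alpha\otimes\sigma^\chi}\big)$ consists of the same $\rho$ and $\{V_h\}$ together with a unitary $Z$ (the image of $z\in C(\mathbb{T})$) commuting with $\rho(A)$ and satisfying $V_hZV_h^*=\chi(h)Z$, because $\sigma^\chi_h(z)=\chi(h)z$; and this relation is equivalent to $ZV_hZ^*=\overline{\chi(h)}V_h$. Matching $\mu\leftrightarrow z$ and $\lambda^\alpha_h\leftrightarrow\lambda_h$ (with $a\mapsto a$) thus yields the desired isomorphism $\zeta_\chi$. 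The triangle commutes because both composites send $a\in A$ to $a$ and $\lambda^\alpha_h$ to $\lambda_h$, and equivariance is immediate: both $\widehat{\alpha}^\chi$ and $\widehat{\alpha\otimes\sigma^\chi}$ fix the extra generator and scale the $G$-unitaries by the same characters.

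The genuinely delicate part is not the algebra but the bookkeeping: one must fix the conventions for the dual actions $\widehat{\alpha}$, $\widehat{\alpha^g}$, $\widehat{\alpha}^\chi$ and for the twists $\sigma^g$, $\sigma^\chi$ so that the scalars $\chi(g)$ versus $\overline{\chi(g)}$ line up, and one must ensure that the generator-level correspondences extend to honest isomorphisms — which is exactly where exhibiting mutually inverse assignments on covariant data, rather than a mere surjection, does the work. I expect the verification of equivariance, resting on a single scalar computation in each item, to be the only place where a misaligned convention could silently break the argument.
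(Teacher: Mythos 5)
Your argument is correct, and for part \ref{item: CommDiag2} it is essentially the paper's proof repackaged: the paper also produces $\zeta_\chi$ and its inverse by exhibiting mutually inverse covariant pairs (the $\mathbb Z$-generator $\lambda_\chi$ is sent to $\iota_{A\otimes C(\mathbb{T})}(1\otimes w^*)$, and conversely $a\otimes w\mapsto \iota_A(a)\lambda_\chi^*$ together with $h\mapsto\lambda^\alpha_h$). For part \ref{item: CommDiag1}, however, your route is genuinely different: the paper constructs $\eta_g=(\iota_g\rtimes G)\otimes j_g$ with $j_g(w)=\lambda_g^*\lambda_g^{\alpha^g}$ via the universal property of the maximal tensor product, and then builds the inverse $\mu_g$ by hand from a covariant pair for $(A\rtimes_{\alpha_g}\mathbb{Z},\alpha^g)$ inside $\mathcal{M}((A\rtimes_\alpha G)\otimes C(\mathbb{T}))$; you instead identify the iterated crossed product with $A\rtimes_\theta(\mathbb{Z}\times G)$ and untwist $\theta$ by the group automorphism $\Phi(n,h)=(n,ng+h)$, so that invertibility comes for free and the distinguished unitary $u_{(1,-g)}=\lambda_g(\lambda_g^{\alpha^g})^*$ is found conceptually rather than guessed. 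Your approach buys a cleaner explanation of \emph{why} the isomorphism exists; the paper's buys explicit formulas that are then reused in the duality arguments. The one point to nail down is the convention you flag yourself: the paper's computations use $\widehat{\alpha}_\chi(\lambda^\alpha_h)=\chi(h)\lambda^\alpha_h$, whereas you use the conjugate; this only interchanges $w$ with $w^*$ as the image of the extra generator (which is why the paper's $j_g(w)$ is the adjoint of your $w$, and why it sends $\lambda_\chi$ to $\iota(1\otimes w^*)$ rather than $\iota(1\otimes w)$), and does not affect the existence, equivariance, or commutativity claims.
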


\begin{proof}
\ref{item: CommDiag1}:
Fix $g\in G$ and let $w=\id_{\mathbb T}\in C(\mathbb{T})$.
Define an equivariant $*$-homomorphism 
\[
j_g\colon (C(\mathbb{T}),\sigma^g)\to(\mathcal{M}((A\rtimes_{\alpha_g}\mathbb{Z})\rtimes_{\alpha^g}G),\widehat{\alpha^g})
\]
by $j_g(w)=\lambda_g^*\lambda_g^{\alpha^{g}}$, where $\lambda_g$ is the canonical unitary associated to $A\rtimes_{\alpha_g}\mathbb{Z}$ and $\lambda_g^{\alpha^g}$ is one of the canonical unitaries in $\mathcal{M}((A\rtimes_{\alpha_g}\mathbb{Z})\rtimes_{\alpha^g}G)$.
Note that we still denote by $\widehat{\alpha^g}$ the induced action on $\mathcal{M}((A\rtimes_{\alpha_g}\mathbb{Z})\rtimes_{\alpha^g}G)$.
Furthermore, using that $G$ is abelian, we also have that the image of $j_g$ commutes with the image of $\iota_g\rtimes G$, so by the universal property of the maximal tensor product, there exists an equivariant $*$-homomorphism $\eta_g= (\iota_g\rtimes G)\otimes j_g\colon((A\rtimes_\alpha G)\otimes C(\mathbb{T}),\widehat{\alpha}\otimes\sigma^g)\to ((A\rtimes_{\alpha_g}\mathbb{Z})\rtimes_{\alpha^g}G,\widehat{\alpha^g})$.
Moreover, it is immediate to see that $\eta_g$ makes the required diagram commute.

To show that $\eta_g$ is an isomorphism, we will build its inverse. First, the nondegenerate $*$-homomorphism $\iota_A\otimes 1\colon A\to \mathcal{M}((A\rtimes_\alpha G)\otimes C(\mathbb{T}))$ and the unitary representation
\[
\mathbb{Z}\to \mathcal{U}(\mathcal{M}((A\rtimes_\alpha G)\otimes C(\mathbb{T}))), \ 1\mapsto \lambda_g^\alpha\otimes w^*
\]
form a covariant pair for $(A,\alpha_g)$, so they induce a $*$-homomorphism 
\[
\xi_g\colon A\rtimes_{\alpha_g}\mathbb{Z}\to \mathcal{M}((A\rtimes_\alpha G)\otimes C(\mathbb{T})).
\]
Then consider the unitary representation 
\[
G\to \mathcal{U}(\mathcal{M}((A\rtimes_\alpha G)\otimes C(\mathbb{T}))),\ h\mapsto \lambda_h^\alpha\otimes 1.
\]
Again, since $G$ is abelian, this unitary representation together with $\xi_g$ form a covariant pair for $(A\rtimes_{\alpha_g}\mathbb{Z},\alpha^g)$.
Indeed, for every $h\in G$, $a\in A$ and $n\in\mathbb Z$, we have that 
\begin{align*}
    (\lambda_h^\alpha\otimes 1)\xi_g(a\lambda_g^n)((\lambda_h^\alpha)^*\otimes 1) &= (\lambda_h^\alpha\otimes 1)(\iota_A(a)(\lambda_g^\alpha)^n\otimes w^{*n})((\lambda_h^\alpha)^*\otimes 1)\\ &= \iota_A(\alpha_h(a))(\lambda_g^\alpha\otimes w^*)^n\\
    &= \xi_g(\alpha^g_h(a\lambda_g^n)).
\end{align*}
Therefore, we obtain a $*$-homomorphism
\[
\mu_g\colon (A\rtimes_{\alpha_g}\mathbb{Z})\rtimes_{\alpha^g}G\to(A\rtimes_\alpha G)\otimes C(\mathbb{T}),
\]
which is the inverse of $\eta_g$.
This finishes the proof of \ref{item: CommDiag1}.

\ref{item: CommDiag2}:
Fix $\chi\in\widehat{G}$ and still denote $w=\id_{\mathbb T}\in C(\mathbb{T})$.
The $*$-homomorphism 
\begin{equation}\label{eq: CovHom1}
(\id_A\otimes 1)\rtimes G\colon A\rtimes_\alpha G\to (A\otimes C(\mathbb{T}))\rtimes_{\alpha\otimes\sigma^\chi}G
\end{equation}
and the unitary representation
\[
\mathbb{Z}\to \mathcal{U}(\mathcal{M}((A\otimes C(\mathbb{T}))\rtimes_{\alpha\otimes\sigma^\chi}G))
\]
sending $1$ to $\iota_{A\otimes C(\mathbb{T})}(1_{\mathcal{M}(A)}\otimes w^*)$ define a covariant pair for $(A\rtimes_\alpha G, \widehat{\alpha}_\chi)$.
Indeed, $A\otimes C(\mathbb{T})$ commutes with $\iota_{A\otimes C(\mathbb{T})}(1_{\mathcal{M}(A)}\otimes w^*)$ and for every $g\in G$ we have
\begin{align*}
&\iota_{A\otimes C(\mathbb{T})}(1_{\mathcal{M}(A)}\otimes w^*)\lambda_g^{\alpha\otimes\sigma^\chi}\iota_{A\otimes C(\mathbb{T})}(1_{\mathcal{M}(A)}\otimes w) \\
&= \iota_{A\otimes C(\mathbb{T})}(1_{\mathcal{M}(A)}\otimes w^*)\iota_{A\otimes C(\mathbb{T})}(1_{\mathcal{M}(A)}\otimes\chi(g) w)     \lambda_g^{\alpha\otimes\sigma^\chi}\\
&= \chi(g)\lambda_g^{\alpha\otimes\sigma^\chi}.
\end{align*}
Then let $\zeta_\chi\colon (A\rtimes_\alpha G)\rtimes_{\widehat{\alpha}_\chi}\mathbb{Z}\to (A\otimes C(\mathbb{T}))\rtimes_{\alpha\otimes\sigma^\chi}G$ be the induced $*$-homomorphism.
The left side carries the action $\widehat{\alpha}^\chi$ and the right side carries the action $\widehat{\alpha\otimes\sigma^\chi}$.
Clearly $\zeta_\chi$ is already equivariant on $A\rtimes_\alpha G$ and by construction it sends the canonical unitary $\lambda_\chi$ of $(A\rtimes_\alpha G)\rtimes_{\widehat{\alpha}_\chi}\mathbb{Z}$, which is fixed under $\widehat{\alpha}^\chi$, to $\iota_{A\otimes C(\mathbb{T})}(1_{\mathcal{M}(A)}\otimes w^*)$, which is likewise fixed under $\widehat{\alpha\otimes\sigma^\chi}$.
Thus $\zeta_\chi$ is equivariant. 
Note that $\zeta_\chi$ makes the required diagram commute by \eqref{eq: CovHom1}.

To construct the inverse of $\zeta_\chi$, note that the $*$-homomorphism
\[
A\otimes C(\mathbb{T})\to \mathcal{M}((A\rtimes_\alpha G)\rtimes_{\widehat{\alpha}_\chi}\mathbb{Z})
\]
given by sending $a\otimes w$ to $\iota_A(a)\lambda_\chi^*$ and the unitary representation
\[
G\to \mathcal{U}(\mathcal{M}((A\rtimes_\alpha G)\rtimes_{\widehat{\alpha}_\chi}\mathbb{Z}))
\]
given by $g\mapsto \lambda_g^\alpha$ define a covariant pair for $(A\otimes C(\mathbb{T}),\alpha\otimes\sigma^\chi)$.
Here $\lambda_\chi$ stands for the canonical unitary of $(A\rtimes_\alpha G)\rtimes_{\widehat{\alpha}_\chi}\mathbb{Z}$.
To see this, observe that for all $a\in A$, $g\in G$ and $n\in\mathbb Z$, one has 
\begin{align*}
    \lambda_g^\alpha\iota_A(a)(\lambda_\chi^n)^* (\lambda_g^\alpha)^* &= \iota_A(\alpha_g(a))\lambda_g^\alpha(\lambda_{\chi}^n)^*(\lambda_g^\alpha)^*\\
    &= \iota_A(\alpha_g(a)) \lambda_g^\alpha \widehat{\alpha}_{\chi}^{-n}(\lambda_g^\alpha)^*(\lambda_{\chi}^n)^* \\ &= \iota_A(\alpha_g(a))\chi(g)^n(\lambda_\chi^n)^*.
\end{align*}
Thus, the induced $*$-homomorphism $(A\otimes C(\mathbb{T}))\rtimes_{\alpha\otimes\sigma^\chi}G\to (A\rtimes_\alpha G)\rtimes_{\widehat{\alpha}_\chi}\mathbb{Z}$ is the inverse of $\zeta_\chi$. 
\end{proof}

To obtain the rational version of the duality result below, we need to collect one more fact about equivariant $\mathcal{Z}$-stability. 
Since the proposition below works in the generality of \emph{strongly self-absorbing} \cstar-algebras (\cite[Definition 1.3(iv)]{TomsWinterSSA}), we will prove it in this generality. Note that if $\mathcal{D}$ is a strongly self-absorbing \cstar-algebra, an action $\alpha$ is \emph{equivariantly $\mathcal{D}$-stable} if $\alpha$ is cocycle conjugate to $\alpha\otimes\id_{\mathcal{D}}$.

\begin{rmk}\label{rmk: TensorCrossedProd}
Let $G$ be a second-countable, locally compact, abelian group, $A$ be a \cstar-algebra and $\alpha\colon G\curvearrowright A$. Note that for any \cstar-algebra $C$, we have canonical $\widehat{G}$-equivariant isomorphisms 
\[
\big( (A\rtimes_\alpha G)\otimes C, \widehat{\alpha}\otimes\id_C\big) \cong \big( (A\otimes C)\rtimes_{\alpha\otimes \id_C}G, \widehat{\alpha\otimes\id_C} \big).
\] 
\end{rmk}

\begin{prop}\label{prop: EquivZStab}
Let $A$ be a separable \cstar-algebra and $G$ be a second-countable, locally compact, abelian group.
Let $\alpha\colon G\curvearrowright A$ be a continuous action and $\mathcal{D}$ be a strongly self-absorbing \cstar-algebra.
Then $\alpha$ is equivariantly $\mathcal{D}$-stable if and only if $\widehat{\alpha}$ is equivariantly $\mathcal{D}$-stable.   
\end{prop}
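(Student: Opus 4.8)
The plan is to reduce everything to a single implication, which I will call $(\star)$: for any continuous action $\beta\colon G\curvearrowright B$ on a separable \cstar-algebra, equivariant $\mathcal{D}$-stability of $\beta$ implies equivariant $\mathcal{D}$-stability of the dual action $\widehat{\beta}$. Granting $(\star)$, the forward implication of the proposition is just the case $\beta=\alpha$. The backward implication will follow by applying $(\star)$ to $\beta=\widehat{\alpha}$ and then using Takai--Takesaki duality to transport the conclusion back to $\alpha$.

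To prove $(\star)$, I would start from a cocycle conjugacy between $\beta$ and $\beta\otimes\id_{\mathcal{D}}$ and push it through the crossed-product functor. A cocycle conjugacy factors as a conjugacy followed by an exterior equivalence, and both of these are preserved by forming crossed products; moreover the resulting isomorphism of crossed products intertwines the respective dual actions. For the exterior-equivalence part this uses that the implementing cocycle lies in $\mathcal{M}(B\otimes\mathcal{D})$, on which the dual $\widehat{G}$-action acts trivially, so the dual actions genuinely match. This yields that $\widehat{\beta}$ is cocycle conjugate to $\widehat{\beta\otimes\id_{\mathcal{D}}}$. Invoking the canonical $\widehat{G}$-equivariant identification $\widehat{\beta\otimes\id_{\mathcal{D}}}\cong\widehat{\beta}\otimes\id_{\mathcal{D}}$ from Remark~\ref{rmk: TensorCrossedProd}, I conclude that $\widehat{\beta}$ is cocycle conjugate to $\widehat{\beta}\otimes\id_{\mathcal{D}}$, i.e.\ equivariantly $\mathcal{D}$-stable.

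For the backward implication, applying $(\star)$ to $\widehat{\alpha}$ shows that the bidual action $\widehat{\widehat{\alpha}}$, viewed as an action of $\widehat{\widehat{G}}\cong G$ on $(A\rtimes_\alpha G)\rtimes_{\widehat{\alpha}}\widehat{G}$, is equivariantly $\mathcal{D}$-stable. By Takai--Takesaki duality this dynamical system is equivariantly isomorphic to $(A\otimes\mathcal{K}(L^2(G)),\alpha\otimes\Ad\rho)$, where $\rho$ is the right regular representation of $G$. Since $g\mapsto 1_A\otimes\rho_g$ is an $(\alpha\otimes\id)$-cocycle implementing $\alpha\otimes\Ad\rho$ (the cocycle identity is exactly the homomorphism property of $\rho$), this action is exterior equivalent, hence cocycle conjugate, to $\alpha\otimes\id_{\mathcal{K}(L^2(G))}$. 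Therefore $\alpha\otimes\id_{\mathcal{K}(L^2(G))}$ is equivariantly $\mathcal{D}$-stable.

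It remains to descend from the stabilisation to $\alpha$ itself, and this is the main technical point of the argument. I would phrase it through the equivariant central sequence algebra: since $\mathcal{D}$ is strongly self-absorbing, equivariant $\mathcal{D}$-stability of a continuous action is equivalent to the existence of a unital equivariant $*$-homomorphism from $(\mathcal{D},\id_{\mathcal{D}})$ into the fixed-point algebra of $(F_\infty(-),\widetilde{(-)}_\infty)$ (via Lemma~\ref{lemma: central-ss} applied to the first-factor embedding). Combining this with the stabilisation-invariance $F_\infty(A\otimes\mathcal{K}(L^2(G)))\cong F_\infty(A)$, which is $G$-equivariant because the tensor factor carries the trivial action, a unital equivariant embedding of $\mathcal{D}$ into the fixed points of $F_\infty(A\otimes\mathcal{K}(L^2(G)))$ transports to one into the fixed points of $F_\infty(A)$. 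This gives equivariant $\mathcal{D}$-stability of $\alpha$ and completes the proof. The forward implication and the duality bookkeeping are essentially formal; the only place requiring genuine care is this transport of equivariant $\mathcal{D}$-stability across the stabilisation (equivalently, across the passage to the full $(\alpha\otimes\id)$-invariant corner $A\cong A\otimes e$ for a rank-one projection $e$).
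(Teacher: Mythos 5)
Your argument is correct, but it takes a genuinely different route from the paper. The paper's proof is a three-step chain of equivalences entirely inside the sequentially split framework: by \cite[Corollary 3.8]{Szabo18ssa} together with Lemma~\ref{lemma: central-ss}, equivariant $\mathcal{D}$-stability of $\alpha$ is equivalent to the first-factor embedding $\id_A\otimes 1_{\mathcal D}$ being equivariantly sequentially split; by \cite[Corollary 3.17]{SeqSplit} and Remark~\ref{rmk: TensorCrossedProd} this is equivalent to $(\id_{A\rtimes_\alpha G})\otimes 1_{\mathcal D}$ being equivariantly sequentially split; and the first equivalence applied again to $\widehat{\alpha}$ closes the loop. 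Since \cite[Corollary 3.17]{SeqSplit} is itself an ``if and only if'', both directions come for free and symmetrically, with no appeal to Takai--Takesaki duality and no descent from the stabilisation. You instead prove the forward implication $(\star)$ by hand (pushing the cocycle conjugacy through the crossed-product functor --- this is fine, and in fact yields honest conjugacy of the duals), and then obtain the converse by applying $(\star)$ to $\widehat{\alpha}$, invoking Takai duality to identify the bidual with $\alpha\otimes\Ad\rho\sim\alpha\otimes\id_{\mathcal K(L^2(G))}$, and descending to $\alpha$ via the equivariant identification $F_\infty(A\otimes\mathcal K)\cong F_\infty(A)$ together with the McDuff-type characterisation of equivariant $\mathcal{D}$-stability. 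This is essentially an unfolding of the black box \cite[Corollary 3.17]{SeqSplit}, whose proof is itself a Takai-duality argument; what your version costs is the extra permanence input in the final descent (stability of $F_\infty$, or equivalently of equivariant $\mathcal{D}$-stability, under passage to the full hereditary corner $A\cong A\otimes e$), which you correctly flag as the only non-formal step but should support with a citation such as \cite[Corollary 1.10]{KirchbergAbel} and the relevant permanence result from \cite{Szabo18ssa}. What it buys is a more self-contained argument whose forward direction needs nothing beyond the definition of the dual action.
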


\begin{proof}
Combining \cite[Corollary 3.8]{Szabo18ssa} and \cite[Lemma 4.2]{SeqSplit}, $\alpha$ is equivariantly $\mathcal{D}$-stable if and only if the first-factor embedding $\id_A\otimes 1_{\mathcal{D}} \colon (A, \alpha) \to
(A \otimes \mathcal{D}, \alpha\otimes\id)$ is equivariantly sequentially split.
Then, combining \cite[Corollary 3.17]{SeqSplit} and Remark \ref{rmk: TensorCrossedProd}, $\id_A\otimes 1_{\mathcal{D}}$ is equivariantly sequentially split if and only if the $*$-homomorphism \[(\id_A\rtimes G)\otimes 1_{\mathcal{D}}\colon (A\rtimes_\alpha G,\widehat{\alpha})\to ((A\rtimes_\alpha G)\otimes\mathcal{D},\widehat{\alpha}\otimes\id)\] is equivariantly sequentially split. 
Using again \cite[Corollary 3.8]{Szabo18ssa} and \cite[Lemma 4.2]{SeqSplit}, $(\id_A\rtimes G)\otimes 1_{\mathcal{D}}$ is equivariantly sequentially split if and only if $\widehat{\alpha}$ is equivariantly $\mathcal{D}$-stable. 
\end{proof}

\begin{theorem}\label{thm: MainResultDuality}
Let $A$ be a separable \cstar-algebra and $G$ be a second-countable, locally compact, abelian group.
Let $\alpha\colon G\curvearrowright A$ be a continuous action.
\begin{enumerate}[label=\textup{(\roman*)}]
    \item The action $\alpha$ is (rationally) pointwise strongly approximately inner if and only if the dual action $\widehat{\alpha}$ has the (rational) abelian Rokhlin property.\label{item: MainThm1}
    \item The action $\alpha$ has the (rational) abelian Rokhlin property if and only if the action $\widehat{\alpha}$ is (rationally) pointwise strongly approximately inner.\label{item: MainThm2}
\end{enumerate}
\end{theorem}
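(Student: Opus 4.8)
The plan is to reduce the entire theorem to the two equivalences established earlier, namely Lemma~\ref{lemma: AbRokhSeqSplit} (characterising the abelian Rokhlin property via sequential splitting of $\id_A\otimes 1\colon (A,\alpha)\hookrightarrow (A\otimes C(\mathbb{T}),\alpha\otimes\sigma^\chi)$) and Lemma~\ref{lemma: EquivSeqSplit} (characterising pointwise strong approximate innerness via sequential splitting of the inclusions $\iota_g\colon (A,\alpha)\hookrightarrow (A\rtimes_{\alpha_g}\mathbb{Z},\alpha^g)$), and then transport these splitting conditions across the crossed product using the commuting diagrams from Lemma~\ref{lemma: CommDiagrams}. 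The crucial input is \cite[Corollary 3.17]{SeqSplit}, which asserts that an equivariant $*$-homomorphism $(A,\alpha)\to(B,\beta)$ of $G$-actions is equivariantly sequentially split if and only if the induced map $(A\rtimes_\alpha G,\widehat\alpha)\to(B\rtimes_\beta G,\widehat\beta)$ on crossed products is $\widehat G$-equivariantly sequentially split. Since both characterising conditions are stated as sequential splitting of specific equivariant inclusions, and Lemma~\ref{lemma: CommDiagrams} exhibits the dual inclusions as compositions of an isomorphism with these crossed-product maps, the non-rational halves of \ref{item: MainThm1} and \ref{item: MainThm2} will follow by pure diagram chasing.

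Concretely, for the non-rational part of \ref{item: MainThm2}, I would fix $\chi\in\widehat G$ and observe that $\alpha$ has the abelian Rokhlin property if and only if $\id_A\otimes 1\colon(A,\alpha)\to(A\otimes C(\mathbb{T}),\alpha\otimes\sigma^\chi)$ is equivariantly sequentially split for every $\chi$, by Lemma~\ref{lemma: AbRokhSeqSplit}. Applying the crossed-product criterion, this is equivalent to $(\id_A\otimes 1)\rtimes G\colon (A\rtimes_\alpha G,\widehat\alpha)\to ((A\otimes C(\mathbb{T}))\rtimes_{\alpha\otimes\sigma^\chi}G,\widehat{\alpha\otimes\sigma^\chi})$ being $\widehat G$-equivariantly sequentially split. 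Via the isomorphism $\zeta_\chi$ of Lemma~\ref{lemma: CommDiagrams}\ref{item: CommDiag2} and the commuting triangle therein, this map is (up to the isomorphism $\zeta_\chi$) precisely the canonical inclusion $(A\rtimes_\alpha G,\widehat\alpha)\hookrightarrow ((A\rtimes_\alpha G)\rtimes_{\widehat\alpha_\chi}\mathbb{Z},\widehat\alpha^\chi)$, so the splitting condition is equivalent to $\widehat\alpha$ being pointwise strongly approximately inner by Lemma~\ref{lemma: EquivSeqSplit} (ranging $\chi$ over $\widehat G$, which parametrises the group elements of the dual group). The non-rational part of \ref{item: MainThm1} proceeds symmetrically, fixing $g\in G$ and using the isomorphism $\eta_g$ of Lemma~\ref{lemma: CommDiagrams}\ref{item: CommDiag1} together with $\iota_g\rtimes G$ to match $\iota_g$ with the first-factor embedding into $(A\rtimes_\alpha G)\otimes C(\mathbb{T})$ carrying the action $\widehat\alpha\otimes\sigma^g$; here $\sigma^g$ is an action of $\widehat G$ and $g\in G\cong\widehat{\widehat G}$ plays the role of the character, so this identifies exactly with the abelian Rokhlin condition for $\widehat\alpha$.

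For the rational versions, the plan is to combine the non-rational equivalences just proved with Proposition~\ref{prop: EquivZStab} and the permanence properties baked into the definitions. The rational abelian Rokhlin property for $\alpha$ means $\alpha$ is equivariantly $\mathcal{Z}$-stable together with $\alpha\otimes\id_{M_{p^\infty}}$ and $\alpha\otimes\id_{M_{q^\infty}}$ having the abelian Rokhlin property for two distinct primes; similarly on the dual side. Proposition~\ref{prop: EquivZStab} handles the equivariant $\mathcal{Z}$-stability clause, giving that $\alpha$ is equivariantly $\mathcal{Z}$-stable if and only if $\widehat\alpha$ is. For the remaining clauses I would apply the non-rational equivalences to the tensored actions: using Remark~\ref{rmk: TensorCrossedProd}, the dual of $\alpha\otimes\id_{M_{r^\infty}}$ is identified with $\widehat\alpha\otimes\id_{M_{r^\infty}}$, so that the non-rational part of \ref{item: MainThm1} applied to $\alpha\otimes\id_{M_{r^\infty}}$ directly matches the pointwise strong approximate innerness of $\alpha\otimes\id_{M_{r^\infty}}$ with the abelian Rokhlin property of $\widehat\alpha\otimes\id_{M_{r^\infty}}$, and likewise for \ref{item: MainThm2}.

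The step I expect to require the most care is the precise bookkeeping of actions under the crossed-product/sequential-splitting correspondence. The subtlety is that one action ($\alpha$) is by $G$ while its dual $\widehat\alpha$ is by $\widehat G$, and the auxiliary actions $\sigma^\chi$, $\sigma^g$ live on $C(\mathbb{T})$ but are indexed so that $\sigma^\chi$ is a $G$-action whereas $\sigma^g$ is a $\widehat G$-action; one must verify that the natural identification $G\cong\widehat{\widehat G}$ makes the abelian Rokhlin condition for $\widehat\alpha$ (which nominally quantifies over characters of $\widehat G$, i.e.\ over $g\in G$) line up exactly with the family of inclusions $\iota_g$ indexed by $g\in G$. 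The diagrams of Lemma~\ref{lemma: CommDiagrams} are engineered to make precisely these identifications, so once one tracks the equivariance of $\eta_g$ and $\zeta_\chi$ carefully, no genuinely new estimate is needed, and the whole theorem is a formal consequence of the preparatory lemmas.
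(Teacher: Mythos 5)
Your proposal is correct and follows essentially the same route as the paper's own proof: both reduce the statement to the sequential-splitting characterisations of Lemmas~\ref{lemma: AbRokhSeqSplit} and~\ref{lemma: EquivSeqSplit}, transport them through \cite[Corollary 3.17]{SeqSplit}, identify the resulting dual maps via the commuting diagrams of Lemma~\ref{lemma: CommDiagrams}, and handle the rational versions by combining Proposition~\ref{prop: EquivZStab} with Remark~\ref{rmk: TensorCrossedProd}. No gaps.
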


\begin{proof}
Let us first consider the statements concerning the abelian Rokhlin property and pointwise strong approximate innerness.

\ref{item: MainThm1}:
By Lemma \ref{lemma: EquivSeqSplit}, $\alpha$ is pointwise strongly approximately inner if and only if the canonical inclusion
\[
\iota_g\colon (A,\alpha)\hookrightarrow (A\rtimes_{\alpha_g}\mathbb{Z}, \alpha^g)
\]
is $G$-equivariantly sequentially split for every $g\in G$.
Then, \cite[Corollary 3.17]{SeqSplit} shows that this happens if and only if the dual map 
\[
\iota_g\rtimes G\colon (A\rtimes_\alpha G,\widehat{\alpha})\to ((A\rtimes_{\alpha_g}\mathbb{Z})\rtimes_{\alpha^g}G,\widehat{\alpha^g})
\]
is $\widehat{G}$-equivariantly sequentially split for every $g\in G$.
By Lemma~\ref{lemma: CommDiagrams}\ref{item: CommDiag1}, this is equivalent to the map 
\[
\id_{A\rtimes_\alpha G}\otimes 1\colon (A\rtimes_\alpha G,\widehat{\alpha})\to ((A\rtimes_\alpha G)\otimes C(\mathbb{T}),\widehat{\alpha}\otimes\sigma^g)
\]
being $\widehat{G}$-equivariantly sequentially split for every $g\in G$.
By Lemma~\ref{lemma: AbRokhSeqSplit}, this is indeed equivalent to $\widehat{\alpha}$ having the abelian Rokhlin property.

\ref{item: MainThm2}:
By Lemma \ref{lemma: AbRokhSeqSplit}, $\alpha$ has the abelian Rokhlin property if and only if for every $\chi\in\widehat{G}$, the equivariant $*$-homomorphism 
\[
\id_A\otimes 1\colon (A,\alpha)\hookrightarrow (A\otimes C(\mathbb{T}),\alpha\otimes\sigma^\chi)
\]
is $G$-equivariantly sequentially split. 
Then, \cite[Corollary 3.17]{SeqSplit} shows that this is the case if and only if the dual map 
\[
(\id_A\otimes 1)\rtimes G\colon (A\rtimes_\alpha G,\widehat{\alpha})\to ((A\otimes C(\mathbb{T}))\rtimes_{\alpha\otimes\sigma^\chi}G, \widehat{\alpha\otimes\sigma^\chi})
\]
is $\widehat{G}$-equivariantly sequentially split for every $\chi\in\widehat{G}$. 
By Lemma~\ref{lemma: CommDiagrams}\ref{item: CommDiag2}, this is equivalent to the canonical inclusion 
\[
(A\rtimes_\alpha G,\widehat{\alpha})\to ((A\rtimes_\alpha G)\rtimes_{\widehat{\alpha}_\chi}\mathbb{Z},\widehat{\alpha}^\chi)
\]
being $\widehat{G}$-equivariantly sequentially split for every $\chi\in\widehat{G}$.
By Lemma~\ref{lemma: EquivSeqSplit}, this is indeed equivalent to $\widehat{\alpha}$ being pointwise strongly approximately inner.

We now prove the statements with the rational versions of the abelian Rokhlin property and pointwise strong approximate innerness.
By definition, $\alpha$ is rationally pointwise strongly approximately inner if and only if $\alpha$ is equivariantly $\mathcal{Z}$-stable and there exist distinct prime numbers $p,q$ such that $\alpha\otimes \id_{M_{p^\infty}}$ and $\alpha\otimes \id_{M_{q^\infty}}$ are pointwise strongly approximately inner.
Combining Proposition \ref{prop: EquivZStab}, the proof of \ref{item: MainThm1} above, and the identification of $\widehat{\alpha}\otimes\id_C$ with $\widehat{\alpha\otimes\id_C}$ from Remark \ref{rmk: TensorCrossedProd}, this happens if and only if $\widehat{\alpha}$ has the rational abelian Rokhlin property. 
The rational version of the statement in \ref{item: MainThm2} follows by the analogous argument.
\end{proof}

Combining Theorem~\ref{thm: MainResultDuality} and Remark~\ref{rmk: AbRokh} yields the following corollary that generalises a similar duality result for flows proved by Kishimoto in the case when $A$ is a unital UCT Kirchberg algebra (see \cite[Theorem 4.6]{Kish04} or \cite[Theorem 1.3]{Kish03}).
In these references, the pointwise strong approximate innerness for flows was described as ``each $\alpha_t$ is $\alpha$-invariantly approximately inner'' and just defined for unital \cstar-algebras.

\begin{cor}\label{cor: Flows}
Let $A$ be a separable \cstar-algebra, $k\in\mathbb{N}$, and $\alpha\colon\mathbb{R}^k\curvearrowright A$ be a continuous action.
\begin{enumerate}[label=\textup{(\roman*)}]
    \item The action $\alpha$ is pointwise strongly approximately inner if and only if the dual action $\widehat{\alpha}$ has the Rokhlin property.\label{item: Flows1}
    \item The action $\alpha$ has the Rokhlin property if and only if the action $\widehat{\alpha}$ is pointwise strongly approximately inner.\label{item: Flows2}
\end{enumerate}
\end{cor}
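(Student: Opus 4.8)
The plan is to obtain this statement as a direct specialisation of Theorem~\ref{thm: MainResultDuality} to the self-dual group $\mathbb{R}^k$. First I would record the one structural input coming from Pontryagin duality: the dual group $\widehat{\mathbb{R}^k}$ is canonically isomorphic to $\mathbb{R}^k$ itself, so that both $\alpha$ and its dual $\widehat{\alpha}$ are continuous actions of a group isomorphic to $\mathbb{R}^k$. Consequently, Remark~\ref{rmk: AbRokh} is available for both actions simultaneously, and it identifies the abelian Rokhlin property with the known Rokhlin property for any action of $\mathbb{R}^k$.

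With this in hand, I would deduce part~\ref{item: Flows1} by invoking Theorem~\ref{thm: MainResultDuality}\ref{item: MainThm1}: the latter states that $\alpha$ is pointwise strongly approximately inner if and only if $\widehat{\alpha}$ has the abelian Rokhlin property. Since $\widehat{\alpha}$ is an action of $\widehat{\mathbb{R}^k}\cong\mathbb{R}^k$, I would then apply Remark~\ref{rmk: AbRokh} to replace \enquote{abelian Rokhlin property} by \enquote{Rokhlin property} on the right-hand side, producing the asserted equivalence. Part~\ref{item: Flows2} is entirely symmetric: I would use Theorem~\ref{thm: MainResultDuality}\ref{item: MainThm2} together with Remark~\ref{rmk: AbRokh}, now applied to $\alpha$ rather than to $\widehat{\alpha}$, since $\alpha$ is itself an $\mathbb{R}^k$-action.

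The proof carries no genuine obstacle, as the corollary is precisely the restriction of the general duality theorem to a group that happens to equal its own Pontryagin dual. The only points deserving attention are the identification $\widehat{\mathbb{R}^k}\cong\mathbb{R}^k$ and the bookkeeping of which side of each equivalence Remark~\ref{rmk: AbRokh} is applied to; once these are settled, both equivalences are immediate.
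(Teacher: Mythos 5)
Your argument is exactly the paper's: the corollary is obtained by combining Theorem~\ref{thm: MainResultDuality} with Remark~\ref{rmk: AbRokh}, using the self-duality $\widehat{\mathbb{R}^k}\cong\mathbb{R}^k$ to know that the remark applies to whichever of $\alpha$ or $\widehat{\alpha}$ carries the Rokhlin-type condition in each part. The proposal is correct and complete.
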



\section{Traces on crossed products}

In this section, we will show that the abelian Rokhlin property allows one to obtain a full description of the densely defined lower semicontinuous traces on the associated crossed products as those coming from invariant traces.
Further below we will argue that this argument unifies various special results of this type in the literature that were obtained with comparably more ad-hoc arguments.

\begin{notation}
Given a \cstar-algebra $A$, we write $\tilde{T}(A)$ for the cone of densely defined lower semicontinuous tracial weights on $A$.
If we are given an action $\alpha: G\curvearrowright A$ of some group, then $\tilde{T}(A)^\alpha$ denotes the set of $\alpha$-invariant traces, i.e., those traces $\tau$ with $\tau=\tau\circ\alpha_g$ for all $g\in G$.
\end{notation}

\begin{rmk}\label{rmk: CPTrace}
Let $\alpha\colon G\curvearrowright A$ be a continuous action of a locally compact group on a \cstar-algebra.
The crossed product $A\rtimes_\alpha G$ is a suitable norm-closure of $C_{\mathrm{c}}(G,A)$, which carries the $*$-algebra structure given by a convolution product.
Given an invariant trace $\tau\in\tilde{T}(A)^\alpha$, the assignment
\[
\hat{\tau}: C_{\mathrm{c}}(G,A)_+\to [0,\infty],\quad \hat{\tau}(f)=\tau(f(0)),
\]
extends to a densely defined lower semicontinuous trace $\hat{\tau}\in\tilde{T}(A\rtimes_\alpha G)$.
\end{rmk}

The following is well-known. 
To our knowledge, this result goes back to \cite[Theorem 5.1]{VigandPedersen82}. 
This requires a bit of translation in the language of this article. 
Given a \cstar-dynamical system $(A,G,\alpha)$, we apply \cite[Theorem 5.1]{VigandPedersen82} in the case when the multiplier $\chi$ is $1$.
Then, note that a densely defined lower semicontinuous tracial weight is automatically semifinite, which yields that the $(G,1)$-traces in \cite[Theorem 5.1]{VigandPedersen82} are precisely the densely defined lower semicontinuous $\alpha$-invariant tracial weights on $A$. 
Moreover, a densely defined lower semicontinuous weight satisfying the KMS condition for the trivial flow at inverse temperature $1$ is a tracial weight, so the densely defined $\widehat{\alpha}$-invariant semitraces with multiplier $1$ are exactly the densely defined lower semicontinuous $\widehat{\alpha}$-invariant tracial weights.
With these identifications, one gets the following statement.

\begin{prop}[{\cite[Theorem 5.1]{VigandPedersen82}}]\label{prop: tracial-correspondence}
Let $\alpha\colon G\curvearrowright A$ be a continuous action of a locally compact, abelian group on a \cstar-algebra.
Then
\[
\{ \hat{\tau} \mid \tau\in\tilde{T}(A)^\alpha \} = \tilde{T}(A\rtimes_\alpha G)^{\widehat{\alpha}}.
\]
\end{prop}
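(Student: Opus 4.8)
The plan is to prove the two inclusions separately, establishing the containment $\{\hat{\tau} \mid \tau\in\tilde{T}(A)^\alpha\}\subseteq \tilde{T}(A\rtimes_\alpha G)^{\widehat{\alpha}}$ by a direct computation, and the reverse containment by reduction to \cite[Theorem 5.1]{VigandPedersen82} under the dictionary assembled in the discussion preceding the statement.

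First I would treat the easy inclusion. By Remark \ref{rmk: CPTrace}, for $\tau\in\tilde{T}(A)^\alpha$ the weight $\hat{\tau}$ already belongs to $\tilde{T}(A\rtimes_\alpha G)$, so only $\widehat{\alpha}$-invariance remains to be checked. Recalling that the dual action is implemented on $C_{\mathrm{c}}(G,A)$ by $(\widehat{\alpha}_\chi f)(g)=\chi(g)f(g)$, one has $(\widehat{\alpha}_\chi f)(0)=\chi(0)f(0)=f(0)$ for every $\chi\in\widehat{G}$. Since $\widehat{\alpha}_\chi$ restricts to an isometric automorphism of the dense $*$-subalgebra $C_{\mathrm{c}}(G,A)$, it preserves $C_{\mathrm{c}}(G,A)_+$, whence
\[
\hat{\tau}(\widehat{\alpha}_\chi f)=\tau\big((\widehat{\alpha}_\chi f)(0)\big)=\tau(f(0))=\hat{\tau}(f)
\]
on $C_{\mathrm{c}}(G,A)_+$. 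By the lower semicontinuous extension procedure defining $\hat{\tau}$, together with the continuity of $\widehat{\alpha}_\chi$, this identity passes to all of $(A\rtimes_\alpha G)_+$, giving $\hat{\tau}\in\tilde{T}(A\rtimes_\alpha G)^{\widehat{\alpha}}$.

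For the reverse inclusion I would invoke \cite[Theorem 5.1]{VigandPedersen82} under the translation set up above. An element $\phi\in\tilde{T}(A\rtimes_\alpha G)^{\widehat{\alpha}}$ is a densely defined lower semicontinuous $\widehat{\alpha}$-invariant tracial weight, which by that translation is precisely a densely defined $\widehat{\alpha}$-invariant semitrace with multiplier $1$; the cited theorem then produces an $\alpha$-invariant element $\tau\in\tilde{T}(A)^\alpha$ whose associated dual weight is $\phi$, and one verifies that this dual weight coincides with $\hat{\tau}$ as defined in Remark \ref{rmk: CPTrace}.

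The conceptual content, and the step I expect to be the main obstacle, is that an $\widehat{\alpha}$-invariant trace must be \emph{concentrated at the identity} of $G$, so that it is recovered from its restriction to $A$. When $G$ is discrete (equivalently, $\widehat{G}$ is compact) this is transparent: averaging the dual action yields the canonical faithful conditional expectation $E\colon A\rtimes_\alpha G\to A$, $E=\int_{\widehat{G}}\widehat{\alpha}_\chi\,d\chi$, and lower semicontinuity together with invariance forces $\phi=\phi\circ E$, so that $\phi=\hat{\tau}$ with $\tau=\phi|_A$. For general $G$ the dual group $\widehat{G}$ need not be compact and no such averaging is available, which is exactly why one must appeal to the dual weight machinery of \cite{VigandPedersen82} rather than a direct conditional expectation argument.
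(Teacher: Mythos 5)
Your proposal is correct and matches the paper's treatment: the paper also obtains the statement by citing \cite[Theorem 5.1]{VigandPedersen82} with exactly the translation you describe (multiplier $1$, densely defined lower semicontinuous tracial weights being semifinite, and KMS weights for the trivial flow being traces). Your explicit verification of the inclusion $\{\hat{\tau}\mid\tau\in\tilde{T}(A)^\alpha\}\subseteq\tilde{T}(A\rtimes_\alpha G)^{\widehat{\alpha}}$ via $(\widehat{\alpha}_\chi f)(0)=f(0)$ is a harmless elaboration of what the paper leaves implicit in the citation.
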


Combining Proposition~\ref{prop: tracial-correspondence} and Theorem~\ref{thm: MainResultDuality}, we obtain a complete description of densely defined lower semicontinuous traces on crossed products.

\begin{theorem}\label{thm: Traces}
Let $A$ be a separable \cstar-algebra, $G$ be a second-countable, locally compact, abelian group and $\alpha\colon G\curvearrowright A$ an action.
Suppose that there exists a separable \cstar-algebra $C$ with a unique tracial state $\tau_0$ and with $\tilde{T}(C)=\{\lambda \tau_{0}\mid \lambda\in\mathbb{R}_+\}$ such that $\alpha\otimes\id_C$ has the abelian Rokhlin property (e.g., if $\alpha$ has the rational abelian Rokhlin property).\footnote{Here $\otimes$ stands for the minimal tensor product.} 
Then 
\[
\tilde{T}(A\rtimes_\alpha G)=\{\hat{\tau} \mid \tau\in\tilde{T}(A)^\alpha \}.
\]
\end{theorem}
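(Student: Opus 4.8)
The plan is to reduce the entire statement to the single claim that every $\sigma\in\tilde{T}(A\rtimes_\alpha G)$ is automatically $\widehat{\alpha}$-invariant. Granting this, Proposition~\ref{prop: tracial-correspondence} immediately yields $\tilde{T}(A\rtimes_\alpha G)=\tilde{T}(A\rtimes_\alpha G)^{\widehat{\alpha}}=\{\hat{\tau}\mid\tau\in\tilde{T}(A)^\alpha\}$, the reverse inclusion being exactly Remark~\ref{rmk: CPTrace}. To bring the hypothesis into play, I would apply Theorem~\ref{thm: MainResultDuality}\ref{item: MainThm2} to the action $\alpha\otimes\id_C$: since it has the abelian Rokhlin property, its dual $\widehat{\alpha\otimes\id_C}$ is pointwise strongly approximately inner. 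Via the identification of Remark~\ref{rmk: TensorCrossedProd}, this dual action is $\widehat{\alpha}\otimes\id_C$ acting on $(A\rtimes_\alpha G)\otimes C$, so for every $\chi\in\widehat{G}$ the automorphism $\widehat{\alpha}_\chi\otimes\id_C$ is implemented approximately by a sequence of contractions $v_n\in(A\rtimes_\alpha G)\otimes C$ with $v_nv_n^*$ and $v_n^*v_n$ converging strictly to the unit of the multiplier algebra.

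The technical core is the observation that such an automorphism preserves every densely defined lower semicontinuous trace $\rho$ on $(A\rtimes_\alpha G)\otimes C$. Writing $\beta=\widehat{\alpha}_\chi\otimes\id_C$ and fixing a positive $x$ in the domain of $\rho$, traciality gives $\rho(v_nxv_n^*)=\rho(x^{1/2}v_n^*v_nx^{1/2})$; since $0\le x^{1/2}v_n^*v_nx^{1/2}\le x$ and $x^{1/2}v_n^*v_nx^{1/2}\to x$ in norm by strict convergence of $v_n^*v_n$, we obtain $\rho(v_nxv_n^*)\le\rho(x)$. As $\beta(x)=\lim_n v_nxv_n^*$ in norm, lower semicontinuity of $\rho$ yields $\rho(\beta(x))\le\rho(x)$. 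Applying the same estimate to $\beta^{-1}=(\widehat{\alpha}\otimes\id_C)_{\chi^{-1}}$, which is likewise pointwise strongly approximately inner, gives $\rho(\beta^{-1}(y))\le\rho(y)$; taking $y=\beta(x)$ produces the reverse inequality and hence $\rho\circ\beta=\rho$.

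To descend this invariance to $A\rtimes_\alpha G$ I would use the uniqueness of the trace on $C$. The plan is to form the product trace $\sigma\otimes\tau_0$, most cleanly realised as $\sigma\circ(\id\otimes\tau_0)$ through the bounded slice map $\id\otimes\tau_0\colon(A\rtimes_\alpha G)\otimes C\to A\rtimes_\alpha G$; the computation on elementary tensors shows this is a trace, and it is densely defined and lower semicontinuous since $\id\otimes\tau_0$ is completely positive, contractive, and carries the relevant Pedersen ideal into that of $A\rtimes_\alpha G$. By the previous paragraph this trace is invariant under $\widehat{\alpha}_\chi\otimes\id_C$. Since the slice map intertwines $\widehat{\alpha}_\chi\otimes\id_C$ with $\widehat{\alpha}_\chi$ and satisfies $(\id\otimes\tau_0)(b\otimes 1_C)=b$ (using an approximate unit of $C$ normalised by $\tau_0$ if $C$ is nonunital), evaluating the invariance on elements $b\otimes 1_C$ forces $\sigma\circ\widehat{\alpha}_\chi=\sigma$. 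As $\chi\in\widehat{G}$ was arbitrary, $\sigma$ is $\widehat{\alpha}$-invariant, which completes the argument.

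The step I expect to be the main obstacle is the trace-preservation claim in the genuinely unbounded setting: one must verify carefully that the product/slice trace $\sigma\otimes\tau_0$ is again densely defined and lower semicontinuous, and that the manipulations involving the Pedersen ideal, the strict convergence of $v_n^*v_n$, and lower semicontinuity go through for tracial \emph{weights} rather than bounded tracial states. Everything else is a formal consequence of the duality in Theorem~\ref{thm: MainResultDuality}, the correspondence in Proposition~\ref{prop: tracial-correspondence}, and the hypothesis that $C$ has an essentially unique trace.
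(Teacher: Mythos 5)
Your proposal is correct and its essential step coincides with the paper's: the inequality $\theta(\widehat{\alpha}_\chi(x))\le\liminf_n\theta(v_nxv_n^*)=\liminf_n\theta(x^{1/2}v_n^*v_nx^{1/2})\le\theta(x)$ via lower semicontinuity, traciality, and $v_n^*v_n\le 1$, followed by the symmetric estimate for $\chi^{-1}$, is exactly the computation in the paper, and it is valid for tracial weights for the reasons you give. The only divergence is in the bookkeeping around $C$. The paper first reduces to the case where $\alpha$ itself has the abelian Rokhlin property, by observing that $\tau\mapsto\tau\otimes\tau_0$ is a \emph{bijection} $\tilde{T}(B)\to\tilde{T}(B\otimes C)$ for every $B$ (this is where the full hypothesis $\tilde{T}(C)=\mathbb{R}_+\tau_0$ is used) and fitting these bijections into a commutative square with the maps $\tau\mapsto\hat{\tau}$. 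You instead run the computation upstairs on $(A\rtimes_\alpha G)\otimes C$ for the single trace $\sigma\otimes\tau_0$ and then descend; for that direction it is cleaner to evaluate the invariance $\rho\circ(\widehat{\alpha}_\chi\otimes\id_C)=\rho$ on positive elementary tensors $b\otimes c$ with $\tau_0(c)=1$, which sidesteps the multiplier-algebra issue with $b\otimes 1_C$ and the slice-map technicalities you flag. Note that your variant only uses the \emph{existence} of the tracial state $\tau_0$, not the uniqueness hypothesis on $\tilde{T}(C)$; the uniqueness is what the paper's reduction consumes, and either route establishes the stated surjectivity. The technical worries you list at the end (dense definedness and lower semicontinuity of the product trace) are not actually load-bearing: the core inequality $\rho(\beta(x))\le\rho(x)$ holds for any lower semicontinuous tracial weight and any positive $x$, finite value or not.
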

\begin{proof}
On the one hand, we have a canonical isomorphism $(A\otimes C)\rtimes_{\alpha\otimes\id_C} G\cong (A\rtimes_\alpha G)\otimes C$.
Due to the assumptions on $C$, if $\tau_0$ is the unique tracial state on $C$, then the assignment $\tau\mapsto\tau\otimes\tau_0$ induces a bijection $\tilde{T}(B)\to\tilde{T}(B\otimes C)$ for any \cstar-algebra $B$.
Combining these facts, we get a commutative diagram of maps
\[
\xymatrix{
\tilde{T}(A)^\alpha \ar[rr]^{\tau\mapsto\hat{\tau}} \ar[d] && \tilde{T}(A\rtimes_\alpha G) \ar[d] \\
\tilde{T}(A\otimes C)^{\alpha\otimes\id_C} \ar[rr]  && \tilde{T}\big( (A\otimes C)\rtimes_{\alpha\otimes\id_C} G \big)
}
\]
where the vertical maps are bijections.
Our claim amounts to the assertion that the upper horizontal arrow is surjective, which is hence clearly equivalent to saying that the lower horizontal arrow is surjective.
By replacing $\alpha\colon G\curvearrowright A$ with $\alpha\otimes\id_C\colon G\curvearrowright A\otimes C$, we may therefore assume without loss of generality that $\alpha$ has the abelian Rokhlin property.

By Proposition~\ref{prop: tracial-correspondence}, it suffices to prove that every trace $\theta\in\tilde{T}(A\rtimes_\alpha G)$ is $\widehat{\alpha}$-invariant.
Let us fix a trace $\theta$, $\chi\in\widehat{G}$, and $x\in (A\rtimes_\alpha G)_+$.
We shall argue that $\theta(x)=\theta(\widehat{\alpha}_\chi(x))$.
Since $\alpha$ has the abelian Rokhlin property, Theorem~\ref{thm: MainResultDuality} implies that there exists a sequence of contractions $(v_n)_{n\in\mathbb{N}}$ such that $v_n^*v_n$ converges strictly to the unit in the multiplier algebra of $A\rtimes_\alpha G$ and $\widehat{\alpha}_\chi(x)=\lim_{n\to\infty}v_nxv_n^*$. 
Then, by lower semicontinuity of $\theta$ we get that
\begin{align*}
\theta(\widehat{\alpha}_\chi(x)) &= \theta(\lim_{n\to\infty}v_nxv_n^*)\leq \liminf_{n\to\infty}\theta(v_nx^{1/2}x^{1/2}v_n^*) = \liminf_{n\to\infty}\theta(x^{1/2}v_n^*v_nx^{1/2})\\&\leq \liminf_{n\to\infty}\theta(x) = \theta(x).
\end{align*}
Moreover, running the same argument with $\chi^{-1}$ and $\widehat{\alpha}_\chi(x)$ instead of $\chi$ and $x$ shows that 
\[
\theta(x)=\theta(\widehat{\alpha}_{\chi^{-1}}(\widehat{\alpha}_\chi(x)))\leq \theta(\widehat{\alpha}_\chi(x)),
\] and hence $\theta(x)=\theta(\widehat{\alpha}_\chi(x))$.
As $\theta$, $\chi$, and $x$ were arbitrary, this finishes the proof.
\end{proof}

\begin{rmk}\label{rmk: Traces}
First, note that Theorem \ref{thm: ThmC} is a special case of the theorem above. 
To see this, take $C$ to be the complex numbers and note that when $G$ is discrete, the formula in Remark \ref{rmk: CPTrace} becomes $\hat{\tau}=\tau\circ E$ for any invariant trace $\tau\in \tilde{T}(A)^\alpha$, where $E\colon A\rtimes_\alpha G\to A$ is the canonical conditional expectation.
Moreover, Theorem \ref{thm: Traces} provides a unifying framework for proving many results of this nature found in the literature.
In the case when the acting group is $\mathbb{Z}$, the results in \cite[Lemma 3.6(A)]{EST22} and \cite[Lemma 3.4(b)]{Tho21}, or the application of \cite[Lemma 2.17]{Nea24} in \cite[Lemma 3.7]{Nea24} and \cite[Lemma 4.5]{Nea24}, can all be recovered by using Theorem~\ref{thm: Traces} instead.
To see this, note that in all those situations, the $\mathbb Z$-actions are given as the tensor product of some automorphism $\alpha_0$ with a strongly outer automorphism of $\mathcal{Z}$; typically one chooses the noncommutative Bernoulli shift via the identification $\mathcal{Z}\cong\mathcal{Z}^{\otimes\mathbb Z}$.
By Theorem \ref{thm: AbRokhEquivDef}, automorphisms constructed in this fashion have the rational abelian Rokhlin property and therefore the desired description of the traces on the crossed product follows.
\end{rmk}

\end{document}